\newtheorem{theorem}{Theorem}[section]
\newtheorem{definition}{Definition}[section]
\newtheorem{lemma}[theorem]{Lemma}
\newtheorem{corollary}[theorem]{Corollary}
\newtheorem{proposition}[theorem]{Proposition}
\newtheorem{remark}[theorem]{Remark}
\newtheorem{example}[theorem]{Example}
\numberwithin{equation}{section}
\newcommand{\field}[1]{\mathbb{#1}}
\newcommand{\C}{\field{C}}
\newcommand{\im}{\rm Im}
\renewcommand{\ker}{\textnormal{ker}}
\renewcommand{\im}{\textnormal{Im}}
\begin{document}
	
\title{The Bogomolov multiplier of a multiplicative Lie algebra}
\author{Amit Kumar$^{1}$, Renu Joshi$^{2}$, Mani Shankar Pandey$^{3}$ and Sumit Kumar Upadhyay$^{4}$\vspace{.4cm}\\
		{$^{1, 4}$Department of Applied Sciences,\\ Indian Institute of Information Technology Allahabad\\Prayagraj, U. P., India} \vspace{.3cm}\\ $^{2}$ Department of Mathematics, \\Indian Institute of Science Education and Research \\Bhopal, Madhya Pradesh\\ $^{3}$ Department of Sciences, \\Indian Institute of Information Technology \\Design and Manufacturing, Kurnool, Andhra Pradesh}
	
	\thanks{$^1$amitiiit007@gmail.com, $^2$renu16@iiserb.ac.in, $^3$manishankarpandey4@gmail.com,  $^4$upadhyaysumit365@gmail.com}
	\thanks {2020 Mathematics Subject classification : 15A75, 19C09}
\keywords{Multiplicative Lie algebra, Lie exterior, Bogomolov multiplier, Schur multiplier}

\begin{abstract}
In this paper, we develop  the concept of the Bogomolov multiplier for a multiplicative Lie algebra and establish a Hopf-type formula. Consequently, we see that the Bogomolov multipliers of two isoclinic multiplicative Lie algebras are isomorphic.
\end{abstract}
	\maketitle
	
\section{Introduction}
The Bogomolov multiplier  is a group theoretical invariant which was introduced as an  obstruction  to an important problem in invariant theory discussed by Emmy Noether \cite{Noether}. In 1988, Bogomolov \cite{Bogomolov88} showed that the unramified cohomology group is canonically isomorphic to a certain subgroup ${B_0}(G)$ of the Schur multiplier of a group $G$. Kunyavskii \cite{BK} termed $B_0(G)$ as the Bogomolov multiplier of $G$.  Moravec \cite{PM} provided a new description of the Bogomolov multiplier using the non-abelian exterior square $G\wedge G$ of a group $G$ by showing that $B_0(G)$  is isomorphic to Hom$(\Tilde{B}_0(G),\frac{\mathbb{Q}}{\mathbb{Z}}),$ where the group $\Tilde{B}_0(G)$ can be describe as a section of the non-abelian exterior square.

Following Moravec's construction, Rostami et.al \cite{ZMP} extended the notion of the Bogomolov multiplier to Lie algebras. In the work of Rostami et.al, the Bogomolov multiplier of a Lie algebra $L$ over a field $\Omega$ is defined as a particular factor of a subalgebra of the exterior product $L\wedge L$. P. K. Rai \cite{PK} identified this object as a certain subgroup of the second cohomology group $H^2(L,\Omega)$ by deriving a Hopf-type formula and answered the question affirmatively posed by Kunyavskii regarding the invariance of the Bogomolov multiplier under isoclinism of Lie algebras. 

A multiplicative Lie algebra \cite{GJ} is a group with an extra binary operation which satisfies a non-commutative version of the usual axioms of the Lie bracket.   In recent years, there has been a notable interest in advancing the theory of multiplicative Lie algebras. In 2019,  Lal and Upadhyay \cite{RLS} studied the structure of multiplicative Lie algebras,  the theory of extensions, the second cohomology groups of multiplicative Lie algebras, and in turn the Schur multipliers.  In this paper, we attempt to develop the concept of the Bogomolov multiplier for a multiplicative Lie algebra. We define the Bogomolov multiplier as a certain subgroup of the second cohomology group $H^2_{ML}(G,\C^*)$ of a multiplicative Lie algebra $G$. Also, we define the group 
$\Tilde{B}_0(G)$ as a section of the Lie exterior square and establish an isomorphism between the groups ${B_0}(G) $ and $ \Tilde B_0(G) $ for a finite multiplicative Lie algebra $G$. Consequently, we derive a Hopf-type formula and a five term exact sequence for the Bogomolov multiplier $\Tilde{B}_0(G)$. The Lie exterior square \cite{RLS} provides essential insights into the Schur multiplier of a multiplicative Lie algebra . So, we introduce a related construction, namely the curly Lie exterior square that plays a similar role when examining $\Tilde{B}_0(G)$. We also define the CTP extension to get a universal CTP extension of a perfect multiplicative Lie algebra $G$ by $\Tilde{B}_0(G)$. Recently, Pandey and Upadhyay \cite{MS3} has defined the notion of isoclinism in multiplicative Lie algebras. We show that the Bogomolov multipliers of isoclinic multiplicative Lie algebras are isomorphic.

\section{Preliminaries}
In this section, we revisit the fundamental concepts and properties associated with multiplicative Lie algebras and the Lie exterior square. For a more in-depth exploration of these topics, we recommend referring to \cite{RLS}. 
\begin{definition}\label{D1}\cite{GJ}
	A multiplicative Lie algebra is a triple $ (G,\cdot,\star), $ where $G$ is a set, $ \cdot $ and $ \star $ are two binary operations on $G$ such that $ (G,\cdot) $ is a group (need not be abelian) and for all  $x, y, z  \in  G$, the following identities hold: 
	\begin{enumerate}
		\item $ x\star x=1 $  
		\item $ x\star(yz)=(x\star y){^y(x\star z)} $ 
		\item $ (x 
		y)\star z= {^x(y\star z)} (x\star z) $ 
		\item $ ((x\star y)\star {^yz})((y\star z)\star{^zx})((z\star x)\star{^xy})=1 $ 
		\item $ ^z(x\star y)=(^zx\star {^zy})$ 
	\end{enumerate}
	where $^xy$ denotes $xyx^{-1}.$ We say that $ \star $ is a multiplicative Lie algebra structure on the group $G$.
\end{definition}

Here, we  recall the definition of  the second Lie cohomology group for a multiplicative Lie algebra $G$ as introduced in \cite{RLS}.
 
	A multiplicative Lie 2-cocycle of a multiplicative Lie algebra $G$ with coefficients in an abelian group $H$ with trivial Lie product is a pair $(f,h)$, where $ f \in Z^2(G,H) $ is a group 2-cocycle of $G$ with coefficients in the trivial $G$-module $H$, and $h$ is a map from $ G \times G $ to $H$ satisfying the following equations for all $x, y, z \in G$:
	
	\begin{enumerate}
		\item $ h(x,1)=h(1,x)=h(x,x)=1 $ ,
		
		\item $ h(x,yz)=h(x,y)h(x,z)f(y^{-1},y)^{-1}f(y,x\star z) f(y(x\star z),y^{-1})f(x\star y,^y(x\star z)) $ ,
		
		\item $ h(xy,z)=h(y,z)h(x,z)f(x^{-1},x)^{-1}f(x,y\star z) f(x(y\star z),x^{-1})f(^x(y\star z),(x\star z)) $, 
		
		\item $ h(y\star x,^xz)h(x\star z,^zy)h(z\star y,^yx) f((y\star x)\star ^xz,(x\star z)\star ^zy)f((y\star x)\star ^xz)\cdot(x\star z)\star ^zy, \linebreak (z\star y)\star ^yx)=1 $, 
		
		\item $ h(^zx,^zy)=h(x,y)f(z,x\star y)f(z^{-1},z)^{-1}f(z(x\star y),z^{-1}) $. 
	\end{enumerate}
	
	The set $ Z^2_{ML}(G,H) $ of all multiplicative Lie 2-cocycles of $G$ with coefficients in $H$ is an abelian group with respect to the coordinate wise operation given by $$ (f,h)\cdot (f',h')=(ff',hh').$$
	Given any identity preserving map $g$ from $G$ to $H$, the pair $(\delta g,g^*) $ of maps from $ G\times G $ to H given by $$\delta g(x,y)=g(y)g(xy)^{-1}g(x)$$ and $$g^*(x,y)=g(x\star y)^{-1}$$ is a member of $ Z^2_{ML}(G,H). $
	Let $MAP(G,H)$ denote the group of identity preserving maps from $G$ to $H$. We have a group homomorphism $$\chi: MAP(G,H) \longrightarrow Z^2_{ML}(G,H) $$ given by  $$\chi(g)=(\delta g,g^*). $$
	The image of $\chi$ is called the group of multiplicative Lie 2-coboundaries of $G$ with coefficients in $H$ and is denoted by  $ B^2_{ML}(G,H). $ 
	The quotient group  $ \frac{Z^2_{ML}(G,H)}{ B^2_{ML}(G,H)}$ is called the second Lie cohomology of $G$ with coefficients in $H$, and is denoted by  $ H^2_{ML}(G,H). $

\begin{proposition}\cite[Corollary 6.3]{RLS}
	Let $G$ be a finite multiplicative Lie algebra having a free presentation
	\begin{align*}
		1\longrightarrow R \longrightarrow F \longrightarrow G\longrightarrow 1.
	\end{align*}
	Then $H^2_{ML}(G,\C^*)$ is isomorphic to $\frac{R\cap (F\star F)[F,F]}{(R\star F)[R,F]}$.
\end{proposition}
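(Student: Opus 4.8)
The plan is to prove the isomorphism in two stages: first realise $H^2_{ML}(G,\C^*)$ as the character group $\mathrm{Hom}(M,\C^*)$ of the multiplicative Lie Schur multiplier $M:=\frac{R\cap (F\star F)[F,F]}{(R\star F)[R,F]}$, and then invoke finiteness. Since $G$ is finite, $M$ is a finite abelian group, and because $\C^*$ contains all roots of unity, Pontryagin duality for finite abelian groups yields a (non-canonical) isomorphism $\mathrm{Hom}(M,\C^*)\cong M$. Thus the entire content reduces to producing an isomorphism $H^2_{ML}(G,\C^*)\cong \mathrm{Hom}(M,\C^*)$, which is the multiplicative-Lie analogue of combining Hopf's formula with the universal coefficient theorem.

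To build the map $\Phi\colon H^2_{ML}(G,\C^*)\to \mathrm{Hom}(M,\C^*)$ I would argue as in the classical Hopf-formula proof. Let $\pi\colon F\to G$ be the projection. Given a class represented by $(f,h)\in Z^2_{ML}(G,\C^*)$, pull it back along $\pi$ to a multiplicative Lie $2$-cocycle $(\pi^*f,\pi^*h)$ on $F$. Since $F$ is free, $H^2_{ML}(F,\C^*)=0$, so this pulled-back cocycle is a coboundary: there is an identity-preserving map $g\in MAP(F,\C^*)$ with $\chi(g)=(\delta g,g^*)=(\pi^*f,\pi^*h)$. I would then restrict $g$ to the subgroup $R\cap (F\star F)[F,F]$ and show two things: that this restriction is a \emph{group homomorphism} into $\C^*$, and that it annihilates $(R\star F)[R,F]$. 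The first holds because, for elements mapping to the identity of $G$, the relevant values of $\pi^*f$ and $\pi^*h$ degenerate, forcing $g$ to be multiplicative on $R$; the second expresses that $g$ vanishes on both the commutators $[R,F]$ (governed by the group-cocycle part $f$) and the Lie products $R\star F$ (governed by $h$). Hence $g$ descends to a homomorphism $M\to\C^*$, and I would define $\Phi[(f,h)]$ to be this induced map.

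Next I would check that $\Phi$ is well defined and bijective. Well-definedness has two parts. First, independence of the lift $g$: two choices differ by an element $k$ of $\ker\chi$, that is, a group homomorphism $F\to\C^*$ that is also trivial on all Lie products, hence trivial on $(F\star F)[F,F]$ and so on $M$. Second, independence of the cocycle representative: replacing $(f,h)$ by $(f,h)\cdot\chi(j)$ for $j\in MAP(G,\C^*)$ changes $g$ only by $j\circ\pi$, which is trivial on $R$ (since $\pi(R)=1$) and therefore trivial on $M$. For bijectivity I would exhibit the inverse directly: a homomorphism $\phi\colon M\to\C^*$ lifts to a map on $R\cap(F\star F)[F,F]$, which I extend first to $R$ and then, using a set-theoretic section of $\pi$, to an identity-preserving map on $F$; the resulting coboundary on $F$ restricts to a well-defined element of $Z^2_{ML}(G,\C^*)$, and one verifies that the two constructions are mutually inverse.

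The main obstacle is the verification in the second paragraph that $g|_{R\cap(F\star F)[F,F]}$ is genuinely a homomorphism annihilating $(R\star F)[R,F]$. Unlike the purely group-theoretic Hopf formula, the two operations here interact: the defining identities~(2)--(5) for $h$ couple the Lie product $\star$ with conjugation and with the group $2$-cocycle $f$, so the $\star$-contribution $R\star F$ and the commutator contribution $[R,F]$ must be controlled simultaneously and shown to be compatible. Carefully tracking how $\pi^*f$ and $\pi^*h$ collapse on elements of $R$, so that the additive and multiplicative halves of the coboundary condition $\chi(g)=(\pi^*f,\pi^*h)$ degenerate in the right way, is the technical heart of the argument, and is precisely what the machinery of \cite{RLS} is designed to deliver.
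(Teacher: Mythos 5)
This proposition is quoted by the paper as a known result from \cite{RLS} (their Corollary~6.3); the paper itself supplies no proof, so the relevant comparison is with the cited source and with the closely parallel argument the authors do write out for Theorem~\ref{T2}. Your outline is correct in substance but takes a genuinely different route. You pull a cocycle $(f,h)$ back along $F\to G$, use the vanishing of $H^2_{ML}(F,\C^*)$ to write the pullback as a coboundary $\chi(g)$, and extract from $g$ a character of $M=\frac{R\cap (F\star F)[F,F]}{(R\star F)[R,F]}$ --- the classical Hopf-formula/universal-coefficients argument transplanted to the multiplicative Lie setting, finished by Pontryagin duality for the finite abelian group $M$. The source, and the analogous Theorem~\ref{T2} in this paper, instead work with the central extension $1\to\C^*\to K\to G\to 1$ determined by $(f,h)$ and with the Lie exterior square: a section of $K\to G$ induces a homomorphism $\Gamma_{(f,h)}:G\wedge^L G\to (K\star K)[K,K]$ whose restriction to $\ker\chi\cong\tilde M(G)\cong M$ is the desired character, and the inverse is built from a covering multiplicative Lie algebra $G^*$. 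Your route is more elementary and makes explicit how the normalization identities for $f$ and $h$ force $g|_R$ to be a homomorphism killing $(R\star F)[R,F]$; the paper's route packages all of those identities into the universal object $G\wedge^L G$ and reuses the covering-algebra machinery, which it needs anyway for the Bogomolov-multiplier results. Three inputs in your sketch are load-bearing and should be cited or verified rather than asserted: (i) $H^2_{ML}(F,A)=0$ for free $F$ (every central extension of a free multiplicative Lie algebra splits, from \cite{RLS}); (ii) finiteness of $M$ when $G$ is finite, without which the duality step $\mathrm{Hom}(M,\C^*)\cong M$ fails; and (iii) the fact that $(R\star F)[R,F]$ is generated inside $R$ by the elements $r\star y$, $[r,y]$ and their conjugates, on each of which your computation shows $g$ to be trivial, so that the multiplicativity of $g|_R$ (from $\delta g(r,r')=f(1,1)=1$) really does kill the whole denominator. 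None of these is a wrong step, but they are exactly the points where the argument could silently break if left unchecked.
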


\begin{definition}\cite{RLS}
Let 
	\begin{align*}
	1\longrightarrow R \longrightarrow F \longrightarrow G\longrightarrow 1
\end{align*}
be a free presentation of the multiplicative Lie algebra $G$. Then $\tilde{M}(G) = \frac{R\cap (F\star F)[F,F]}{(R\star F)[R,F]}$ is called the Schur multiplier of the multiplicative Lie algebra $G$.
\end{definition}

\begin{remark}
For a finite multiplicative Lie algebra $G$ having a free presentation
\begin{align*}
	1\longrightarrow R \longrightarrow F \longrightarrow G\longrightarrow 1,
\end{align*}
we have $H^2_{ML}(G,\C^*)\cong \tilde{M}(G) $.
\end{remark}

\begin{definition}\cite{RLS}\label{Lie}
 Let $(G,\cdot, \star)$ be a multiplicative Lie algebra. Consider the multiplicative Lie algebra $G \wedge^L G$ generated by the set  $\{x\wedge y:x,y\in G\}\cup \{[x,y]_0:x,y\in G \}$ of formal symbols subject to the following universal relations: 

\begin{enumerate}
	\item $ 1 \wedge x = x\wedge 1 = x\wedge x = 1 = [1,x]_0 = [x,1]_0 = [x,x]_0$
	
	\item $ (x\wedge y)(y\wedge x) = 1 =[x,y]_0 [y,x]_0 $
	
	\item $(x\wedge yz)^{-1}(x\wedge y)(^yx\wedge ^yz)=1=[x,yz]_0^{-1}[x,y]_0[^yx,^yz]_0$
	
	\item $(xy\wedge z)^{-1}(^xy\wedge ^xz)(x\wedge z)=1=[xy,z]_0^{-1}[^xy,^xz]_0[x,z]_0$
	
	\item $[x\wedge y,u\wedge v]=[[x,y]_0,u\wedge v]=[x,y]_0 \tilde \star(u\wedge v)=[x,y]_0\tilde{\star}[u,v]_0$
	
	\item $(x\wedge y)(^zy\wedge {^zx})=(^{xy}x^{-1}\wedge {^xz})(x\wedge z)$
	
	\item $[x,y]_0[^zy,{^zx}]_0=[^{xy}x^{-1},{^xz}]_0[x,z]_0$
	
	\item $[[x,y]_0,[u,v]_0]=[[x,y],[u,v]]_0$
	
	\item $(x\wedge y)\tilde{\star}(u\wedge v)=(x\star y)\wedge (u\star v)$
	
\end{enumerate}	
where $  x,y,z,u,v\in G.$ The multiplicative Lie algebra $G \wedge^L G$  is said to be the\textit{ Lie exterior square} of $G$.
\end{definition}

From \cite{RLS}, there is a unique multiplicative Lie algebra homomorphism $\chi:G\wedge ^L G \to G$ given by $x\wedge y\mapsto x\star y$ and $[z,w]_0\mapsto [z,w]$. Thus, we have the short exact sequence
\begin{align*}\label{S1}
	1 \longrightarrow \ker(\chi) \longrightarrow G \wedge^L G  \overset \chi\longrightarrow (G\star G)[G,G] \longrightarrow 1
\end{align*}
of multiplicative Lie algebras.	

\begin{theorem}\cite{RLS}\label{T1}
	If $F$ is a free multiplicative Lie algebra, then $F\wedge ^L F\cong (F\star F)[F,F]$ and $\ker(\chi) = \{1\}.$ Further, if 
	\begin{align*}
		1\longrightarrow R \longrightarrow F \longrightarrow G\longrightarrow 1
	\end{align*}
	is a presentation of the multiplicative Lie algebra $G$,  then $\ker(\chi)\cong \frac{R\cap (F\star F)[F,F]}{(R\star F)[R,F]} \cong \tilde{M}(G)$ and $G \wedge^L G \cong \frac{(F\star F)[F,F]}{(R\star F)[R,F]}.$
\end{theorem}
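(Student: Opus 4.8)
The plan is to treat the free case first and then derive the general statement by passing to a quotient. Surjectivity of $\chi$ is immediate: the target $(F\star F)[F,F]$ is generated by the elements $x\star y$ and $[x,y]$, which are the $\chi$-images of the generators $x\wedge y$ and $[x,y]_0$. Hence everything reduces to proving that $\chi$ is injective when $F$ is free, and the natural way to do this is to exhibit an explicit inverse homomorphism $\theta\colon (F\star F)[F,F]\to F\wedge^L F$.

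To construct $\theta$ I would set $\theta(x\star y)=x\wedge y$ and $\theta([x,y])=[x,y]_0$ on generators and extend multiplicatively. The essential use of freeness is in checking that $\theta$ is well defined: in a free multiplicative Lie algebra the only identities relating the elements $x\star y$ and $[x,y]$ within $(F\star F)[F,F]$ are those forced by the axioms of Definition \ref{D1} together with the standard commutator identities of a group. Each such identity is matched by one of the defining relations (1)--(9) of Definition \ref{Lie}. For example, the alternating/unit axiom (1) yields relations (1) and (2); axioms (2) and (5) together yield relation (3); axioms (3) and (5) yield relation (4); relations (6)--(8) encode the interaction between the star bracket and the group commutator; and relation (9) (with (5)) relates the induced bracket $\tilde\star$ to $\star$. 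Granting this matching, $\theta$ respects all relations and is a homomorphism; since $\theta\circ\chi$ and $\chi\circ\theta$ fix the respective generating sets, both are the identity. Thus $\chi$ is an isomorphism and $\ker(\chi)=\{1\}$, proving $F\wedge^L F\cong (F\star F)[F,F]$.

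For a free presentation $1\to R\to F\to G\to 1$, the surjection $\pi\colon F\to G$ induces by functoriality a surjective homomorphism $\pi_*\colon F\wedge^L F\to G\wedge^L G$. Comparing the generators-and-relations presentations of the two Lie exterior squares shows that $G\wedge^L G$ is the quotient of $F\wedge^L F$ by the normal subalgebra $K$ generated by the symbols $r\wedge f$, $f\wedge r$, $[r,f]_0$, $[f,r]_0$ with $r\in R$, $f\in F$, since these are exactly the generators that become trivial once $\bar r=1$ in $G$. Transporting $K$ through the free-case isomorphism $F\wedge^L F\cong (F\star F)[F,F]$ sends it onto the subalgebra generated by $r\star f$ and $[r,f]$, namely $(R\star F)[R,F]$. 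Therefore $G\wedge^L G\cong \frac{(F\star F)[F,F]}{(R\star F)[R,F]}$. Under this identification $\chi$ becomes the map induced by $\pi$, namely $\frac{(F\star F)[F,F]}{(R\star F)[R,F]}\to (G\star G)[G,G]$, which is surjective with kernel $\frac{R\cap (F\star F)[F,F]}{(R\star F)[R,F]}$; by the definition of the Schur multiplier this is $\tilde{M}(G)$, so $\ker(\chi)\cong \tilde{M}(G)$.

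The main obstacle I expect is the well-definedness of $\theta$ in the free case, that is, verifying that the correspondence between the nine relations of Definition \ref{Lie} and the multiplicative Lie algebra axioms is complete, so that no extra relation among the $x\star y$ and $[x,y]$ persists in the free algebra; this is precisely where freeness cannot be dispensed with. A secondary difficulty is the identification of $K=\ker(\pi_*)$ with the normal subalgebra corresponding to $(R\star F)[R,F]$: one must argue that the $R$-relations are not only sufficient but also necessary to pass from $F\wedge^L F$ to $G\wedge^L G$, which again relies on the exactness established in the free case so as not to collapse more than intended.
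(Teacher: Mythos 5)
The paper does not actually prove Theorem \ref{T1}; it is quoted from \cite{RLS}, so there is no in-paper argument to compare against. Judged on its own terms, your proposal has a genuine gap at the decisive step. To define the inverse $\theta\colon (F\star F)[F,F]\to F\wedge^L F$ you prescribe values on the generating set $\{x\star y,\ [x,y]\}$ of the subgroup $(F\star F)[F,F]$ of $F$ and then assert that the only identities holding among these elements are those ``forced by the axioms,'' each matched by one of relations (1)--(9) of Definition \ref{Lie}. That assertion cannot be ``granted'': it \emph{is} the theorem. Saying that $\ker(\chi)=\{1\}$ and saying that relations (1)--(9) exhaust all identities among the elements $x\star y$ and $[x,y]$ in the free object are the same statement, so your well-definedness argument is circular. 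In the purely group-theoretic analogue this is Miller's theorem on relations among commutators (that $F\wedge F\cong[F,F]$ for free $F$, equivalently that $H_2(F)=0$ accounts for all commutator identities), and it requires a genuine argument --- a homological computation, a splitting of the central extension $1\to\ker(\chi)\to F\wedge^L F\to (F\star F)[F,F]\to 1$ using structural facts about $(F\star F)[F,F]$, or the universal property of the free object applied to a suitable auxiliary construction. Note also that $(F\star F)[F,F]$ is a subalgebra of a free multiplicative Lie algebra, not an object presented by generators and relations you control (its underlying group need not even be free), so ``extend multiplicatively and check the relations'' is not a licensed way to build a homomorphism out of it.

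The second half of your argument --- passing to $G=F/R$, identifying $\ker(\pi_*)$ with the image of $(R\star F)[R,F]$ under the free-case isomorphism, and reading off $\ker(\chi)\cong \frac{R\cap (F\star F)[F,F]}{(R\star F)[R,F]}\cong\tilde{M}(G)$ --- is the standard route and is fine in outline, but it is entirely conditional on the free case. Even there, the identification of $\ker(\pi_*)$ with the normal subalgebra $K$ generated by $r\wedge f$, $f \wedge r$, $[r,f]_0$, $[f,r]_0$ needs one more sentence: the inclusion $K\subseteq\ker(\pi_*)$ is clear, while the reverse inclusion requires verifying that $(F\wedge^L F)/K$ satisfies the defining relations of $G\wedge^L G$ with generators independent of the choice of lifts, so that the universal property furnishes a map $G\wedge^L G\to (F\wedge^L F)/K$ inverse to the induced one.
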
 

\begin{definition}
Two  multiplicative Lie algebras $G$ and $H$ are said  to be isoclinic (written as $G \sim_{ml} H$) if  there exist multiplicative Lie algebra isomorphisms $\lambda:\frac{G}{\mathcal{Z}(G)} \longrightarrow \frac{H}{\mathcal{Z}(H)}$ and $\mu:\ ^M[G, G]\longrightarrow \ ^M[H, H]$ such that the image of $^M[a,b]$ under $\mu$ is compatible with the image of $a\mathcal{Z}(G)$ and $b\mathcal{Z}(G)$ under $\lambda$. In other words the following diagram

\[
\xymatrix{
^M[G, G]\ar[d]^{\mu} & \frac{G}{\mathcal{Z}(G)}\times \frac{G}{\mathcal{Z}(G)}\ar[l]^{\phi_{c}~~~~~}  \ar[r]_{~~~~\phi_{s}} \ar[d]_{\lambda\times \lambda} &^M[G, G]  \ar[d]_{\mu} \\
^M[H, H] & \frac{H}{\mathcal{Z}(H)}\times \frac{H}{\mathcal{Z}(H)}\ar[l]_{\psi_{c}~~~~~~} \ar[r]^{~~~~\psi_{s}} & ^M[H, H]
}
\]

is commutative. The pair $(\lambda,\mu)$ is called as isoclinism between the  multiplicative Lie algebras $G$ and $H$.
\end{definition}

\section{Bogomolov multiplier}

In this section, we present the cohomological definition of the Bogomolov multiplier $B_0(G)$ for a multiplicative Lie algebra  $G$. Here we note that $G$ always present a finite multiplicative Lie algebra.  We establish an isomorphism between $B_0(G)$ and the group $\tilde B_0(G)$ when dealing with a finite multiplicative Lie algebra $G$. To achieve this, we commence by introducing a restriction map as follows:

Let $G$  be a multiplicative Lie algebra and $H$ a subalgebra of $G.$ Let $A$ be an abelian group with trivial Lie product. 
Given $(f,h)\in  Z^2_{ML}(G,A)$, let $f_H : H\times H \to A$ and $h_H : H\times H \to A$ be the restrictions of $f$ and $h$ to $ H\times H $ respectively.
Then $(f_H,h_H)\in   Z^2_{ML}(H,A) .$ The map $ Z^2_{ML}(G,A)\to  Z^2_{ML}(H,A)$ given as $(f,h) \mapsto (f_H,h_H) $ is a homomorphism which carries multiplicative Lie 2-coboundaries to multiplicative Lie 2-coboundaries. The induced homomorphism $ H^2_{ML}(G,A)\to  H^2_{ML}(H,A)$ is termed as the \textit{restriction map} and denoted by $Res_{G,H}.$

Now we  define the Bogomolov multiplier ${B_0}(G)$ for a finite multiplicative Lie algebra $G$ utilizing the restriction map as a key tool in the process.

\begin{definition}\label{D2}
For a finite multiplicative Lie algebra $G$, we define the subgroup of $H^2_{ML}(G,\C^*)$ as follows:
	\begin{align*}
		{B_0}(G) =  \bigcap\limits_H \ker(Res_{G,H}) ,
	\end{align*}
	where $H$ is an abelian subalgebra of $G$ and $Res_{G,H}: H^2_{ML}(G,\C^*)\to  H^2_{ML}(H,\C^*).$ We use the term "abelian subalgebra" to refer to an abelian subgroup in which the induced multiplicative Lie algebra structure is trivial. 
\end{definition}
 
In the following proposition, a connection is established between the Bogomolov multiplier and the Schur multiplier of a multiplicative Lie algebra.

\begin{proposition}\label{P2}
Let $G$ be a finite  group with nontrivial multiplicative Lie algebra structure. If each proper subgroup of $G$ is cyclic, then ${B_0}(G) = H^2_{ML}(G,\C^*)\cong \tilde{M}(G)$.
\end{proposition}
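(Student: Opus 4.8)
The plan is to prove $B_0(G)=H^2_{ML}(G,\C^*)$, since the stated isomorphism $H^2_{ML}(G,\C^*)\cong\tilde{M}(G)$ is exactly the Remark above. As $B_0(G)=\bigcap_H\ker(Res_{G,H})$ is by construction a subgroup of $H^2_{ML}(G,\C^*)$, only the reverse inclusion needs work, and I would obtain it by showing that for every abelian subalgebra $H$ the target group $H^2_{ML}(H,\C^*)$ already vanishes; then each $Res_{G,H}$ is the zero map, each $\ker(Res_{G,H})$ is all of $H^2_{ML}(G,\C^*)$, and the intersection collapses to the whole group.

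First I would identify which subalgebras occur in the intersection. Because $G$ carries a \emph{nontrivial} Lie product, $G$ is not an abelian subalgebra of itself; hence every abelian subalgebra $H$ is a proper subgroup of $G$, and by hypothesis $H$ is cyclic, say $H=\langle a\rangle$, with trivial induced Lie product.

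The heart of the argument is the vanishing $H^2_{ML}(H,\C^*)=0$ for such $H$. Fix $(f,h)\in Z^2_{ML}(H,\C^*)$. Since the ordinary Schur multiplier of a finite cyclic group is trivial, $f=\delta g_0$ for some identity-preserving $g_0\colon H\to\C^*$. As $x\star y=1$ on $H$, the coboundary $\chi(g_0)=(\delta g_0,g_0^*)$ satisfies $g_0^*(x,y)=g_0(x\star y)^{-1}=g_0(1)^{-1}=1$, so multiplying by $\chi(g_0)^{-1}=(f^{-1},1)$ shows that $(f,h)$ is cohomologous to $(1,h)$. With the group-cocycle part equal to $1$ and the Lie product trivial, cocycle conditions (2) and (3) collapse to $h(x,yz)=h(x,y)h(x,z)$ and $h(xy,z)=h(x,z)h(y,z)$, so $h$ is a bihomomorphism $H\times H\to\C^*$, while condition (1) gives $h(a,a)=1$. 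On the cyclic group $H$ bimultiplicativity then forces $h(a^{i},a^{j})=h(a,a)^{ij}=1$, whence $h\equiv 1$ and $(f,h)$ is a coboundary. Thus $H^2_{ML}(H,\C^*)=0$.

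Finally, for each abelian subalgebra $H$ the map $Res_{G,H}$ takes values in $H^2_{ML}(H,\C^*)=0$, so $\ker(Res_{G,H})=H^2_{ML}(G,\C^*)$; intersecting over all such $H$ yields $B_0(G)=H^2_{ML}(G,\C^*)\cong\tilde{M}(G)$. I expect the main obstacle to be the middle step: carefully confirming that the remaining cocycle conditions (in particular (4) and (5)) are automatically satisfied once $f$ is reduced to $1$ and the Lie product is trivial, and that the $f$-terms appearing in (2) and (3) genuinely cancel for $f=\delta g_0$ (which reduces to the symmetry $\delta g_0(y^{-1},y)=\delta g_0(y,y^{-1})$ inside the abelian group $\C^*$); the rest is bookkeeping.
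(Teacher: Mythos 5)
Your proof is correct and follows essentially the same route as the paper's: every abelian subalgebra is a proper, hence cyclic, subgroup with trivial Lie product, so each restriction map has full kernel and the intersection is all of $H^2_{ML}(G,\C^*)$. The only difference is that you actually establish the vanishing $H^2_{ML}(H,\C^*)=1$ for such $H$ (via triviality of the Schur multiplier of a cyclic group, reduction to $(1,h)$, and bimultiplicativity forcing $h\equiv 1$), a step the paper merely asserts; your closing worry about conditions (4) and (5) is moot, since $(1,h)$ is automatically a cocycle, being a product of cocycles in the group $Z^2_{ML}(H,\C^*)$.
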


\begin{proof}
Given each proper subgroup $H$ of  $G$ is  cyclic, it follows that $H$ is equipped with only trivial multiplicative Lie algebra structure. So, $H$ is  an abelian subalgebra of  $G$. Thus, in each case $ H^2_{ML}(H,\C^*) = 1$. This implies that $\ker(Res_{G,H}) = H^2_{ML}(G,\C^*) $. Hence the result follows.
\end{proof}

\begin{example}
Let $G$ be a group of order $pqr$, where $p,q$, and $r$ are distinct primes with nontrivial multiplicative Lie algebra structure. Then possible orders of proper subgroups $H$ of $G$ are : $1,p,q,r,pq,pr,qr$. If $o(H)\in \{1,p,q,r\}$, $H$ is cyclic. If $o(H)\in \{pq,pr,qr\}$ and $H$ is abelian (non abelian $H$ does not have any role when calculating ${B_0}(G)$, $H$ must be cyclic. Hence
Proposition \ref{P2} implies that   ${B_0}(G) \cong \tilde{M}(G)$.
\end{example}

\begin{example}\label{E1}
Consider the group $V_4 = \langle a, b \mid a^2 = 1 = b^2, ab = ba\rangle$ with multiplicative Lie algebra structure $\star$ given by  $a \star b = a.$ Then Proposition \ref{P2} implies that ${B_0}(V_4) \cong \tilde{M}(V_4)\cong V_4$ by \cite{ADSS}.

On the other hand, if we have an abelian group $G$ with trivial multiplicative Lie algebra structure, then by Definition \ref{D2}, we get ${B_0}(G) = 1$. In particular, ${B_0}(V_4) = 1$.
\end{example}

\begin{example}\label{E2}
	Let $G$ be a finite group with trivial Schur multiplier and a multiplicative Lie algebra structure $\star$ such that $[G, G]$ is a  non abelian  simple group. Then $\tilde{M}(G)$ is trivial by \cite{ADSS}, and thus ${B_0}(G)$ is also trivial.
\end{example}

\begin{definition}\label{D3}
	
	Let $G$  be a multiplicative Lie algebra. Define the subgroup of $G \wedge^L G$ as follows:
	\begin{align*}
		\tilde{M_0}(G) = \langle x\wedge y , [x,y]_0 \mid x\star y = 1 = [x,y]  , x,y\in G  \rangle.
	\end{align*}
	Then $\tilde{M_0}(G) \subseteq \ker(\chi) \cong \tilde{M}(G).$ Consequently, we obtain the factor group denoted as $\tilde B_0(G)$, defined as $\frac{\tilde{M}(G)}{\tilde{M_0}(G)}.$
\end{definition}

\begin{remark}
 Take $V_4$ as in Example \ref{E1} with  $a \star b = a.$ By Definition \ref{Lie}, we get $\tilde{M_0}(V_4) = 1$. As we know $\tilde{M}(V_4) \cong V_4$ by \cite{ADSS}. Hence, we have  $\tilde B_0(V_4) \cong V_4 = {B_0}(V_4)$.
\end{remark}

Now, we have the following result:
\begin{theorem}\label{T2}
Let $G$ be a finite multiplicative Lie algebra. Then ${B_0}(G)$ is isomorphic to $Hom(\tilde B_0(G),\C^*).$ Thus ${B_0}(G) \cong \tilde B_0(G). $
\end{theorem}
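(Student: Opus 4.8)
The plan is to realise both sides as $\C^*$-duals of the data carried by the Lie exterior square, and then match the defining intersection of $B_0(G)$ with the subgroup $\tilde{M_0}(G)$ exactly. I would begin by attaching to each class $[(f,h)]\in H^2_{ML}(G,\C^*)$ its central extension $1\to \C^*\to \tilde G\to G\to 1$ of multiplicative Lie algebras, and, fixing an identity-preserving set-theoretic section $s\colon G\to\tilde G$, defining a map $\sigma\colon G\wedge^L G\to \tilde G$ on generators by $x\wedge y\mapsto s(x)\star s(y)$ and $[x,y]_0\mapsto [s(x),s(y)]$. The first task is to check that $\sigma$ respects the nine defining relations of Definition \ref{Lie}; this is exactly where centrality of $\C^*$ (so that $\star$ and $[\,,\,]$ of lifts do not depend on the chosen lifts) and the multiplicative Lie $2$-cocycle identities for $(f,h)$ enter. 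Since the projection $\pi\colon\tilde G\to G$ satisfies $\pi\bigl(\sigma(x\wedge y)\bigr)=x\star y$ and $\pi\bigl(\sigma([x,y]_0)\bigr)=[x,y]$, i.e. $\pi\circ\sigma=\chi$, the restriction of $\sigma$ to $\ker(\chi)\cong\tilde M(G)$ lands in $\ker(\pi)=\C^*$ and produces a homomorphism $\phi_{(f,h)}\colon\tilde M(G)\to\C^*$.

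Next I would show that $[(f,h)]\mapsto\phi_{(f,h)}$ is a natural isomorphism $H^2_{ML}(G,\C^*)\xrightarrow{\ \cong\ }\mathrm{Hom}(\tilde M(G),\C^*)$. Well-definedness (coboundaries are sent to the trivial character) and the homomorphism property (the Baer sum of extensions is sent to the product of characters) are routine. For bijectivity I would argue by non-degeneracy together with counting: a class pairing trivially with all of $\tilde M(G)$ must be a coboundary, and by the Hopf-type isomorphism $H^2_{ML}(G,\C^*)\cong\tilde M(G)$ of Theorem \ref{T1} and finiteness of $G$ one has $|H^2_{ML}(G,\C^*)|=|\tilde M(G)|=|\mathrm{Hom}(\tilde M(G),\C^*)|$, so injectivity forces the map onto. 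Using functoriality of the Lie exterior square in the inclusion of a subalgebra, I would verify that for every abelian subalgebra $H\hookrightarrow G$ the map $Res_{G,H}$ corresponds, under these isomorphisms, to precomposition with the induced map $\tilde M(H)\to\tilde M(G)$; consequently $\ker(Res_{G,H})$ is identified with the annihilator of $\im(\tilde M(H)\to\tilde M(G))$ in $\mathrm{Hom}(\tilde M(G),\C^*)$.

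The combinatorial heart is then the identity $\sum_{H}\im\bigl(\tilde M(H)\to\tilde M(G)\bigr)=\tilde{M_0}(G)$, the sum running over abelian subalgebras $H$. For the inclusion ``$\subseteq$'': if $H$ is abelian with trivial Lie product then $x\star y=1=[x,y]$ for all $x,y\in H$, so $(H\star H)[H,H]=\{1\}$, $\chi_H$ is trivial, and $\tilde M(H)\cong H\wedge^L H$ is generated by symbols $x\wedge y,[x,y]_0$ whose images in $G\wedge^L G$ are generators of $\tilde{M_0}(G)$. For ``$\supseteq$'': each generating pair of $\tilde{M_0}(G)$ satisfies $x\star y=1=[x,y]$, and one checks directly from the axioms of Definition \ref{D1} (e.g. $x\star(xy)=(x\star x)\,{}^x(x\star y)=1$) that $\langle x,y\rangle$ is then an abelian subalgebra, so $x\wedge y$ and $[x,y]_0$ already lie in $\im(\tilde M(\langle x,y\rangle)\to\tilde M(G))$. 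Combining this with the previous step yields
\[
B_0(G)=\bigcap_H\ker(Res_{G,H})=\mathrm{Ann}\Bigl(\textstyle\sum_H\im(\tilde M(H)\to\tilde M(G))\Bigr)=\mathrm{Ann}\bigl(\tilde{M_0}(G)\bigr)=\mathrm{Hom}\Bigl(\tfrac{\tilde M(G)}{\tilde{M_0}(G)},\C^*\Bigr)=\mathrm{Hom}(\tilde B_0(G),\C^*).
\]
Finally, since $G$ is finite, $\tilde B_0(G)$ is a finite abelian group, and every finite abelian group is (non-canonically) isomorphic to its $\C^*$-dual, giving $B_0(G)\cong\mathrm{Hom}(\tilde B_0(G),\C^*)\cong\tilde B_0(G)$.

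I expect the main obstacle to be the first two steps: verifying that $\sigma$ is well defined on all nine relations of the Lie exterior square, and that the induced pairing $H^2_{ML}(G,\C^*)\times\tilde M(G)\to\C^*$ is genuinely perfect and compatible with the restriction maps. Once that duality is in place, the set-theoretic identification of $\sum_H\im(\tilde M(H)\to\tilde M(G))$ with $\tilde{M_0}(G)$ and the concluding duality for finite abelian groups are comparatively formal.
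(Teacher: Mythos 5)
Your overall strategy is the same Moravec-style duality the paper uses, and the first half (the map $G\wedge^L G\to\tilde G$ built from a section of the central extension, restricting to a character of $\tilde M(G)\cong\ker\chi$) coincides with the paper's construction of $\Theta$. Where you genuinely diverge is in the second half. The paper proves bijectivity of $\Theta$ by exhibiting an explicit inverse $\delta$ built from a covering multiplicative Lie algebra $G^*$ (citing \cite[Proposition 3.6]{RLS}), and then handles the Bogomolov part in two separate directions: generators of $\tilde M_0(G)$ die under $\Gamma_{(f,h)}$ because $\langle x,y\rangle$ is an abelian subalgebra, and conversely a character killing $\tilde M_0(G)$ gives a class whose restriction to every abelian subalgebra $A$ splits because the induced extension $A'$ of $A$ becomes abelian with trivial Lie product. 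You instead package the whole second half as a single annihilator computation, $\bigcap_H\ker(Res_{G,H})=\mathrm{Ann}\bigl(\sum_H\im(\tilde M(H)\to\tilde M(G))\bigr)$ together with the identity $\sum_H\im(\tilde M(H)\to\tilde M(G))=\tilde M_0(G)$; this is cleaner and makes the role of naturality of the duality under restriction explicit, and your verification that $\langle x,y\rangle$ is an abelian subalgebra whenever $x\star y=1=[x,y]$ (using axioms (2)--(3) of Definition \ref{D1}) is exactly the point the paper also needs. The one place you are thinner than the paper is injectivity of $\Theta$: you assert non-degeneracy (``a class pairing trivially with all of $\tilde M(G)$ must be a coboundary'') and then count using the Hopf-type formula, whereas the paper avoids having to prove non-degeneracy directly by constructing $\delta$ as a two-sided inverse via the covering algebra. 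If you want a complete argument you should either supply that non-degeneracy proof or adopt the covering-algebra inverse; everything else in your plan goes through.
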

\begin{proof}
First, we establish an isomorphism between two mathematical structures $H^2_{ML}(G,\C^*)$ and $Hom(\tilde{M}(G),\C^*)$ in terms of the Lie exterior square.

Consider a pair $(f,h)$ belonging to the set $H^2_{ML}(G,\C^*)$. We begin by constructing a central extension associated with the pair $(f,h)$ (see \cite{RLS}):
\begin{align*}
	1 \longrightarrow \C^* \overset{i}\longrightarrow K  \overset{p}\longrightarrow G \longrightarrow 1.
\end{align*}

Let $t$ be a section of $p.$ We define a map $\Gamma_{(f,h)} : G \wedge^L G \to (K\star K)[K,K] $ sending $[a,b]_0 \mapsto [t(a),t(b)] $ and $c\wedge d \mapsto (t(c)\star t(d)) $, where $a,b,c,d \in G$. This map is independent of chosen section $t.$ Consequently $\Gamma_{(f,h)}$ is a multiplicative Lie algebra homomorphism (and thus a group homomorphism). Additionally, it's worth noting that $\bar{p} \circ \Gamma_{(f,h)} = \chi$, where  $\bar{p} $ is the restriction of $p$ to $(K\star K)[K,K]$ . Now, if we take an element $x$ from $\tilde{M}(G)$, we find that $\Gamma_{(f,h)}(x)$ lies in the kernel of $p$. Consequently, $\Gamma_{(f,h)}(x)$ belongs to $\C^*$. This leads us to conclude that the restriction of $\Gamma_{(f,h)}$ to $\tilde{M}(G)$ (also denoted as $\Gamma_{(f,h)}$) is an element of $Hom(\tilde{M}(G),\C^*)$. It is easy to see that $\Gamma_{(ff',hh')} = \Gamma_{(f,h)}\Gamma_{(f',h')}.$ Thus we have a group homomorphism $\Theta: H^2_{ML}(G,\C^*)\to Hom(\tilde{M}(G),\C^*)$  defined as follows: $(f,h) \mapsto \Gamma_{(f,h)}.$

Conversely, suppose  $\eta \in Hom(\tilde{M}(G),\C^*)$. It is known that every finite multiplicative Lie algebra has at least one covering multiplicative Lie algebra, as stated in \cite{AMS}. Let's denote this covering multiplicative Lie algebra as $G^*$. Then, we can establish a central extension:
\begin{align*}
	1 \longrightarrow N \overset{j}\longrightarrow G^*  \overset{\rho}\longrightarrow G \longrightarrow 1
\end{align*} 
with $N\subseteq (G^*\star G^*)[G^*,G^*]$ and $N \cong \tilde{M}(G).$ Choose a section $\mu:G \to G^*$ of $\rho$ and define maps $f: G\times G \to G^* $ by $f(x,y) = \mu(x)\mu(y)\mu(xy)^{-1}$ and $h: G\times G \to G^* $ by $h(x,y) = (\mu(x)\star \mu(y))\mu(x\star y)^{-1} $ for $x,y \in G.$ It can be readily verified that both $f$ and $h$ map $G\times G$ into $\tilde{M}(G)$, and that the pair $(\eta f, \eta h) \in Z^2_{ML}(G,\C^*)$. The cohomology class of $(\eta f,\eta h)$ does not depend upon the choice of section $\mu.$ By \cite[Proposition 3.6]{RLS}, we have a  homomorphism $\delta: Hom(\tilde{M}(G),\C^*) \to H^2_{ML}(G,\C^*)$ given by $\delta(\eta) = (\eta f,\eta h) + B^2_{ML}(G,\C^*)$. Furthermore, it is the inverse of the map $\Theta$ previously defined.

Now choose $(f,h)\in {B_0}(G) $ and let $\Theta: H^2_{ML}(G,\C^*)\to Hom(\tilde{M}(G),\C^*)$ be defined as above. Denote $\Gamma_{(f,h)} = \Theta(f,h).$ Let $x,y\in G $ and suppose that $x\star y = 1$ and $[x,y] = 1.$ Then $A = \langle x,y \rangle$ is an abelian subalgebra of $G$, therefore $Res_{G,A}(f,h) = 1.$ This implies $\Gamma_{(f,h)}([x,y]_0) = [\mu(x),\mu(y)] = 1$ and $\Gamma_{(f,h)}(x\wedge y) = (\mu(x)\star \mu(y)) = 1.$ So, $	\tilde{M_0}(G)$ is contained in $\ker(\Gamma_{(f,h)}).$ Therefore $\Theta$ induces a homomorphism $\tilde{\Theta}:{B_0}(G) \to Hom(\frac{\tilde{M}(G)}{\tilde{M_0}(G)}, \C^*). $

Let $\eta\in  Hom(\frac{\tilde{M}(G)}{\tilde{M_0}(G)}, \C^*).$ Then we have a homomorphism $ \tilde{\eta}:{\tilde{M}(G)} \to \C^*.$ Put $(f,h) = \delta(\tilde{\eta}).$ Suppose that 
\begin{align*}
	1 \longrightarrow \C^* \overset{i}\longrightarrow K  \overset{p}\longrightarrow G \longrightarrow 1
\end{align*}
is a central extension associated to $(f,h).$ Choose an arbitrary abelian subalgebra $A$ of $G$. Then we have the following central extension 
\begin{align*}
	1 \longrightarrow \C^* \overset{i}\longrightarrow A'  \overset{p|_{A'}}\longrightarrow A \longrightarrow 1
\end{align*}
that corresponds to $Res_{G,A}{(f,h)}.$ Given that for all $a, b \in A$, we have $[a, b] = 1$ and $a\star b = 1$, it implies that $[a, b]_0$ and $a\wedge b$ both belong to ${\tilde{M_0}(G)}$, which is a subset of $\ker(\tilde{\eta})$. Consequently, $[\bar a, \bar b] = 1$ and $\bar a\star \bar b = 1$ within $A'$.
This observation leads to the conclusion that $A'$ is abelian with a trivial multiplicative Lie algebra structure. Hence the corresponding central extension defined above splits. As a result, $Res_{G,A}{(f,h)} = 1 $. Consequently,  $(f, h)$ belongs to ${B_0}(G)$.
Therefore, we can establish a homomorphism, denoted as $\tilde\delta$, induced by $\delta$, from $Hom(\frac{\tilde{M}(G)}{\tilde{M_0}(G)},\C^*)$ to ${B_0}(G)$ and its inverse is $\tilde{\Theta}$.
As a result, we conclude that ${B_0}(G) \cong \tilde B_0(G)$.
\end{proof}

Let $K(G)$ denote the set $\{(x\star y)[x,y]:x,y\in G\}$. In the following theorem we derive a Hopf-type formula for $\tilde{B_0}(G)$.

\begin{proposition}\label{P3}
	Let $G$ be a multiplicative Lie algebra with free presentation $G \cong \frac{F}{R}$. Then $$\tilde{B_0}(G) \cong \frac{R\cap(F\star F)[F,F]}{\langle \ K(F)\cap R \rangle}.$$
\end{proposition}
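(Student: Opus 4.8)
The plan is to combine the Hopf-type description of the Schur multiplier from Theorem \ref{T1} with the definition $\tilde{B_0}(G)=\tilde{M}(G)/\tilde{M_0}(G)$, and then to identify the denominator $\langle K(F)\cap R\rangle$ as the exact preimage of $\tilde{M_0}(G)$ under the quotient map onto $\tilde{M}(G)$. Throughout I would write $M=(F\star F)[F,F]$ and fix the free presentation $1\to R\to F\to G\to 1$.

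First I would invoke Theorem \ref{T1}, which gives $\tilde{M}(G)\cong (R\cap M)/(R\star F)[R,F]$ together with the isomorphism $G\wedge^L G\cong M/(R\star F)[R,F]$. Under the latter the generators of the Lie exterior square are tracked explicitly: choosing lifts $u,v\in F$ of $x,y\in G$, one has $x\wedge y\mapsto \overline{u\star v}$ and $[x,y]_0\mapsto \overline{[u,v]}$, where the bar denotes the class modulo $(R\star F)[R,F]$. In particular the product $(x\wedge y)[x,y]_0$ corresponds to $\overline{(u\star v)[u,v]}$, and this class lands in $\ker\chi\cong \tilde{M}(G)$ precisely when $(u\star v)[u,v]\in R$, i.e. when $(x\star y)[x,y]=1$ in $G$. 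This is the point that ties the combined symbol $K$ to the Lie exterior square.

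Next I would form the composite homomorphism $\pi\colon R\cap M\twoheadrightarrow \tilde{M}(G)\twoheadrightarrow \tilde{M}(G)/\tilde{M_0}(G)=\tilde{B_0}(G)$ and apply the first isomorphism theorem to obtain $\tilde{B_0}(G)\cong (R\cap M)/\ker\pi$; the whole statement then reduces to the single identity $\ker\pi=\langle K(F)\cap R\rangle$. The inclusion $\langle K(F)\cap R\rangle\subseteq\ker\pi$ is the more conceptual half: for $(u\star v)[u,v]\in R$ the class maps to $(x\wedge y)[x,y]_0$ with $x=\bar u,\ y=\bar v$, and using the defining relations of $G\wedge^L G$ (Definition \ref{Lie}), in particular the bilinearity relations (3), (4) and the compatibility relations (6), (7), one rewrites this product in terms of the generators $x'\wedge y'$ and $[x',y']_0$ subject to $x'\star y'=1=[x',y']$, so that it lands in $\tilde{M_0}(G)$. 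For the reverse inclusion I would show that $(R\star F)[R,F]\subseteq\langle K(F)\cap R\rangle$ and that each chosen preimage of a generator of $\tilde{M_0}(G)$ already lies in $\langle K(F)\cap R\rangle$.

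The main obstacle is exactly the identity $\ker\pi=\langle K(F)\cap R\rangle$, and within it two points demand care. The first is verifying $(R\star F)[R,F]\subseteq\langle K(F)\cap R\rangle$: for $r\in R,\ f\in F$ the element $(r\star f)[r,f]$ maps to $1$ in $G$ and hence lies in $K(F)\cap R$, but one must still separate the factors $r\star f$ and $[r,f]$, and this separation has to be carried out through the multiplicative Lie algebra identities of Definition \ref{D1} rather than by splitting the product formally. The second, genuinely delicate, point is reconciling the product form $(x\wedge y)[x,y]_0$ coming from $K(F)$ with the separate generators $x\wedge y$ and $[x,y]_0$ defining $\tilde{M_0}(G)$: the relations (2)--(9) of the Lie exterior square are what should allow the crossed terms, namely the pairs with $x\star y=[y,x]\neq 1$, to be absorbed into $\tilde{M_0}(G)$, and getting this bookkeeping right is where the argument will stand or fall.
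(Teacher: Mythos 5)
Your outline takes essentially the same route as the paper's proof: both pass through the isomorphism $G\wedge^L G\cong \frac{(F\star F)[F,F]}{(R\star F)[R,F]}$ from Theorem \ref{T1}, identify the image of $\tilde{M_0}(G)$ under it with $\frac{\langle K(F)\cap R\rangle}{(R\star F)[R,F]}$, and conclude by the isomorphism theorems; your reduction to the single identity $\ker\pi=\langle K(F)\cap R\rangle$ is exactly that identification in different packaging. The two issues you flag as delicate are real, and you should be aware that the paper's own proof does not resolve them either: it takes generators $xR\wedge yR$ and $[xR,yR]_0$ of $\tilde{M_0}(G)$ (so that $x\star y$ and $[x,y]$ lie in $R$ \emph{separately}), multiplies their images into the single element $(x\star y)[x,y]$ modulo $(R\star F)[R,F]$, and asserts membership in $\frac{\langle K(F)\cap R\rangle}{(R\star F)[R,F]}$, with no argument for either direction of the comparison between the separate generators of $\tilde{M_0}(G)$ (Definition \ref{D3}) and the product generators $(u\star v)[u,v]\in R$ of $\langle K(F)\cap R\rangle$, and with no verification of the containment $(R\star F)[R,F]\subseteq\langle K(F)\cap R\rangle$ that is needed for the displayed quotient to be the stated one. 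So your plan is faithful to the paper's strategy, and the bookkeeping you defer --- separating $u\star v$ from $[u,v]$ inside $\langle K(F)\cap R\rangle$, and absorbing products whose two factors do not individually lie in $R$ into $\tilde{M_0}(G)$ --- is precisely the content that remains to be supplied in both your version and the paper's; until it is, neither argument is complete.
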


\begin{proof}
	From  \cite{RLS}, there exists an isomorphism between $G\wedge^L G$ and $\frac{(F\star F)[R,F]}{(R\star F)[R,F]}$. This isomorphism maps $xR\wedge yR$ to $(x\star y){(R\star F)[R,F]}$ and $[aR,bR]_0$ to $[a, b]{(R\star F)[R,F]}$.
	
	Without loss of generality, consider arbitrary elements $xR\wedge yR$ and $[xR,yR]_0$ in $\tilde{M_0}(G)$. This implies that $(x\star y) \in R$ and $[x, y] \in R$. Applying the above isomorphism, we have $(x\star y)[x, y]{(R\star F)[R,F]} \in \langle  \frac{R\cap(F\star F)[F,F]}{(R\star F)[R,F]} \cap  K(\frac{F}{(R\star F)[R,F]})  \rangle =  \langle K(\frac{F}{(R\star F)[R,F]})\cap \frac{R}{(R\star F)[R,F]}\rangle =  \frac{\langle K(F)\cap R\rangle}{(R\star F)[R,F]}.$
	Consequently, we can deduce that $\tilde{M_0}(G) \cong \frac{\langle K(F)\cap R\rangle}{(R\star F)[R,F]}.$ Then, by invoking Theorem \ref{T1}, the desired result follows.
\end{proof}
%%%%%%%%%%%%%%%%%%%%%%%%%%%%%%%%%%%%%%%%%%%%%%%%%%%%%%%%%%%%%%%%%%%%%%%%%%%%%

It's noteworthy that every finite multiplicative Lie algebra has at least one covering multiplicative Lie algebra, as established in \cite{AMS}. Indeed, it's crucial to emphasize that the covering multiplicative Lie algebra, denoted as $G^*$, may not be unique, as elaborated upon in \cite{AMS}. However, what is particularly interesting is that for any choice of $G^*$, the following isomorphism holds: $$(G^*\star G^*) [G^*,G^*]\cong \frac{(F\star F)[F,F]}{(R\star F)[R,F]}.$$ This identification allows us to derive an alternative formulation of $\tilde{B_0}(G)$.
 %%%%%%%%%%%%%%%%%%%%%%%%%%%%%%%%%%%%%%%%%%%%%%%%%%%%%%%%%%%%%%%%%%%%%%%%%%%%%%%%
 
\begin{proposition}\label{P4}
	Let $G$ be a multiplicative Lie algebra and $G^*$ its cover. Then 
	$$ \tilde{B_0}(G) \cong \frac {\tilde{M}(G)}{\langle K(G^*)\cap {\tilde{M}(G)} \rangle}.$$
\end{proposition}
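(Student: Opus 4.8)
The plan is to transfer the Hopf-type formula of Proposition \ref{P3} from the free presentation $G \cong F/R$ to the cover $G^*$, using the already-stated identification
\begin{align*}
	(G^*\star G^*)[G^*,G^*]\cong \frac{(F\star F)[F,F]}{(R\star F)[R,F]}.
\end{align*}
Recall that by Theorem \ref{T1} we have $\tilde{M}(G)\cong \frac{R\cap (F\star F)[F,F]}{(R\star F)[R,F]}$, realized as a subalgebra of the right-hand side and hence as a subalgebra $N$ of $(G^*\star G^*)[G^*,G^*]$, exactly the central kernel $N$ appearing in the cover extension $1\to N\to G^*\to G\to 1$. So the main task is to show that under this same identification the denominator $\langle K(F)\cap R\rangle/(R\star F)[R,F]$ from Proposition \ref{P3} corresponds precisely to $\langle K(G^*)\cap \tilde{M}(G)\rangle$, after which dividing numerator by denominator yields the claim directly from Proposition \ref{P3}.

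First I would unpack $K(G^*)$: by definition $K(G^*)=\{(u\star v)[u,v]:u,v\in G^*\}$, and every $u\in G^*$ can be written as $u=\mu(x)n$ (equivalently, lifted along $F\to G^*$) for $x\in G$ and $n\in N$ central. Since $N$ is central and sits inside $(G^*\star G^*)[G^*,G^*]$, I would compute $(u\star v)[u,v]$ modulo $N$ using the defining identities of Definition \ref{D1} (bilinearity-type relations (2),(3) for $\star$ together with commutator identities) to show that its image in $G^*/N\cong G$ is $(x\star y)[x,y]$. The key point is that an element of $K(G^*)$ lies in $\tilde{M}(G)=N$ precisely when its image $(x\star y)[x,y]$ is trivial in $G$, i.e.\ when $(x\star y)[x,y]\in N$; lifting to $F$, this is exactly the condition that the corresponding $(x\star y)[x,y]$ lies in $R$, which is the membership condition defining $K(F)\cap R$. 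Tracking this correspondence through the isomorphism of Theorem \ref{T1} should match $\langle K(G^*)\cap N\rangle$ with the image of $\langle K(F)\cap R\rangle$.

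Then I would conclude: applying Proposition \ref{P3} gives $\tilde{B_0}(G)\cong \frac{R\cap (F\star F)[F,F]}{\langle K(F)\cap R\rangle}$; pushing numerator and denominator through the identification with $G^*$, the numerator becomes $\tilde{M}(G)$ and the denominator becomes $\langle K(G^*)\cap \tilde{M}(G)\rangle$, yielding
\begin{align*}
	\tilde{B_0}(G)\cong \frac{\tilde{M}(G)}{\langle K(G^*)\cap \tilde{M}(G)\rangle}.
\end{align*}
I expect the main obstacle to be the well-definedness and independence of the $K(G^*)$ computation from the choice of lift $u\mapsto x$: because $(u\star v)[u,v]$ depends on representatives only up to central factors in $N$, I must verify that the generated subgroup $\langle K(G^*)\cap \tilde{M}(G)\rangle$ is genuinely independent of these choices and coincides with the image of $\langle K(F)\cap R\rangle$ rather than something larger. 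Establishing that the quotient $K(G^*)$-generators reduce cleanly modulo $(R\star F)[R,F]$ — i.e.\ that no extra relations or missing generators appear when passing between $F$, $G^*$, and $G$ — is the delicate bookkeeping step, and it rests on the centrality of $N$ and the fact that $N\cong \tilde{M}(G)$ is annihilated by the bracket and star operations from outside.
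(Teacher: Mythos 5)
Your proposal is correct and follows essentially the same route as the paper: the paper also reduces Proposition \ref{P4} to the Hopf-type formula by identifying $\tilde{M_0}(G)$ with $\langle K(G^*)\cap \tilde{M}(G)\rangle$ via the central extension $1\to N\to G^*\to G\to 1$ with $N\cong\tilde{M}(G)$, and then quotients $\tilde{M}(G)$ by it. The ``delicate bookkeeping'' you flag is dispatched in the paper simply by re-running the computation from the proof of Proposition \ref{P3} on this extension, where centrality gives $(N\star G^*)[N,G^*]=1$, so that $\frac{\langle K(G^*)\cap N\rangle}{(N\star G^*)[N,G^*]}\cong \langle K(G^*)\cap \tilde{M}(G)\rangle$ directly.
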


\begin{proof}
	Since $G^*$ is a covering of $G,$ we have the following central extension of  multiplicative Lie algebras
	\begin{align*}
	1\longrightarrow N \longrightarrow G^* \overset{\phi}\longrightarrow G\longrightarrow 1
	\end{align*} 
	with $N \cong \tilde M(G)$ and $N \subseteq (G^*\star G^*) [G^*,G^*].$ The proof of Proposition \ref{P2} implies that $\tilde{M_0}(G) \cong \frac{\langle K(G^*)\cap N\rangle}{(N\star G^*)[N,G^*]} \cong {\langle \ K(G^*)\cap {\tilde{M}(G)} \rangle}.$ Hence the result follows.
\end{proof}
%%%%%%%%%%%%%%%%%%%%%%%%%%%%%%%%%%%%%%%%%%%%%%%%%%%%%%%%%%%%%%%%%%%%%%%%%%%%%%

In particular, $ \tilde{B_0}(G)$ is trivial if and only if every element of $\tilde{M}(G)$ can be represented as a product of elements of $ K(G^*)$ that all belongs to $\tilde{M}(G).$ 

\begin{theorem}\label{T3}
		Let $G$ be a multiplicative Lie algebra and $H$ be an ideal of $G$. Then we have the following five term exact sequence:
		\begin{align*}
	\tilde{B_0}(G) \longrightarrow \tilde{B_0}(\frac{G}{H}) \longrightarrow  \frac{H}{\langle K(G)\cap H \rangle} \longrightarrow G^{ab}\longrightarrow (\frac{G}{H})^{ab}\longrightarrow 1 
			\end{align*}
where $G^{ab}$ stands for the abelianizer $\frac{G}{(G\star G)[G,G]}$ of the multiplicative Lie algebra $G$.
\end{theorem}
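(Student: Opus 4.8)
The plan is to reduce everything to a single free presentation and apply the Hopf-type formula of Proposition \ref{P3}, after which the five-term sequence becomes a purely subgroup-theoretic statement inside the free multiplicative Lie algebra $F$ that can be settled by modular-law manipulations.

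First I would fix a free presentation $1\to R\to F\overset{\pi}{\to} G\to 1$ and set $S=\pi^{-1}(H)$, so that $R\subseteq S$, the quotient $F/S\cong G/H$ is a free presentation of $G/H$, and $S$ is an ideal of $F$ because $H$ is an ideal of $G$. Writing $M=(F\star F)[F,F]$, note that $M$ and $S$ are ideals of $F$ and that $K(F)\subseteq M$, since each $(u\star v)[u,v]$ lies in the subgroup generated by $F\star F$ and $[F,F]$. Proposition \ref{P3} then identifies the two $\tilde{B_0}$ terms as
\[
\tilde{B_0}(G)\cong\frac{R\cap M}{\langle K(F)\cap R\rangle},\qquad \tilde{B_0}(G/H)\cong\frac{S\cap M}{\langle K(F)\cap S\rangle},
\]
while a direct computation gives $\frac{H}{\langle K(G)\cap H\rangle}\cong\frac{S}{\langle K(F)\cap S\rangle R}$ (using $H\cong S/R$ and lifting the generators $(x\star y)[x,y]$ of $K(G)$ lying in $H$ to $K(F)\cap S$), together with $G^{ab}\cong F/MR$ and $(G/H)^{ab}\cong F/MS$. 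Putting $T=\langle K(F)\cap S\rangle$, the claimed sequence becomes
\[
\frac{R\cap M}{\langle K(F)\cap R\rangle}\to\frac{S\cap M}{T}\to\frac{S}{TR}\to\frac{F}{MR}\to\frac{F}{MS}\to 1 .
\]

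Next I would define all four maps as those induced by the chain of inclusions $R\cap M\subseteq S\cap M\subseteq S\subseteq F$ together with the identity of $F$ (so the first map is the one induced by the projection $G\to G/H$); each is well defined because $R\subseteq S$, $T\subseteq M$ and $MR\subseteq MS$. Well-definedness of the quotients themselves needs the denominators to be normal in the numerators: $\langle K(F)\cap R\rangle\triangleleft R\cap M$ and $T\triangleleft S\cap M$ follow from the well-definedness of the quotients in Proposition \ref{P3}, while $TR\triangleleft S$ follows from the observation that, since $S$ is an ideal, ${}^s\big((u\star v)[u,v]\big)=\big({}^su\star{}^sv\big)[{}^su,{}^sv]\in K(F)\cap S$ for $s\in S$, so conjugation permutes the generators of $T$. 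Exactness is then checked spot by spot by the modular law: at $F/MS$ the map is visibly onto; at $F/MR$ one uses $SMR=MS$ (valid as $M$ is normal and $R\subseteq S$); at $S/TR$ one uses $S\cap MR=(S\cap M)R=(S\cap M)TR$; and at $(S\cap M)/T$ one uses $(S\cap M)\cap TR=T(R\cap M)=(R\cap M)T$, the last equality holding because $T$ is normal in $S\cap M$. In each case the relevant image and kernel coincide as subgroups of the ambient quotient.

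The main obstacle I expect lies in the middle of the sequence rather than at its ends. Identifying $\frac{H}{\langle K(G)\cap H\rangle}$ with $\frac{S}{TR}$ requires matching the elements of $K(G)$ lying in $H$ with $K(F)\cap S$ and checking that this survives the passage to the quotient by $R$, and the exactness at the $\tilde{B_0}(G/H)$ node rests on the Dedekind-type identities above, which is the one place where normality of $T$ is used in an essential way. A final point to address is that the maps defined through the presentation are the intrinsic ones and in particular are independent of the chosen presentation; this can be handled exactly as in the corresponding argument for $\tilde{M}(G)$ in \cite{RLS}.
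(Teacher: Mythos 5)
Your proposal is correct and follows essentially the same route as the paper: fix a free presentation $1\to R\to F\to G\to 1$, write $H$ as $RT/R$ (your $S=\pi^{-1}(H)$ is the paper's $RT$), identify all five terms via the Hopf-type formula of Proposition \ref{P3} and the standard isomorphisms for $G^{ab}$ and $(G/H)^{ab}$, and verify exactness at each node by computing kernels and images with Dedekind/modular-law identities. Your write-up is, if anything, slightly more explicit than the paper's about which modular-law step is used at each node and about the normality of $\langle K(F)\cap S\rangle R$ in $S$, but the argument is the same.
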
	
	\begin{proof}
		Let $G$ have the free presentation 
		\begin{align*}
			1 \longrightarrow R \longrightarrow  F \longrightarrow G\longrightarrow 1 
		\end{align*}
		and let  \\
		\begin{align*} 
			1 \longrightarrow R \longrightarrow RT \longrightarrow H\longrightarrow 1
		\end{align*}
		be the corresponding free presentation of $H$, where $T$ is an ideal of $F$. Then Proposition \ref{P2} implies that $\tilde{B_0}(G)\cong \frac{R\cap(F\star F)[F,F]}{\langle K(F)\cap R \rangle}$ and $\tilde{B_0}(\frac{G}{H})\cong \frac{RT\cap(F\star F)[F,F]}{\langle K(F)\cap RT \rangle}$.  The canonical epimorphism $\nu:G\longrightarrow \frac{G}{H}$ induces a multiplicative Lie algebra homomorphism ${\nu}^*:\tilde{B_0}(G)\longrightarrow \tilde{B_0}(\frac{G}{H})$.
		Then $\ker({\nu}^*)=\frac{R\cap \langle K(F)\cap RT\rangle}{\langle K(F)\cap R\rangle}$ and $\im({\nu}^*)=\frac{(F\star F)[F,F]\cap \langle K(F)\cap RT \rangle R}{\langle K(F)\cap RT\rangle}$. Now it can be seen that $\langle K(G)\cap H \rangle=\langle K(\frac{F}{R})\cap \frac{RT}{R}\rangle\cong \frac{\langle K(F)\cap RT\rangle R}{R}$, therefore $\frac{H}{\langle K(G)\cap H\rangle}\cong \frac{RT}{\langle K(F)\cap RT\rangle R}$.  Thus we  have a natural multiplicative Lie algebra homomorphism $\phi:\tilde{B_0}(\frac{G}{H})\longrightarrow \frac{H}{\langle K(G)\cap H \rangle}$. Then $\ker(\phi)=\frac{(F\star F)[F,F]\cap \langle K(F)\cap RT \rangle R}{\langle K(F)\cap RT\rangle}$ and $\im(\phi)=\frac{(RT\cap (F\star F)[F,F])R}{\langle K(F)\cap RT\rangle R}=\frac{(F\star F)[F,F]R\cap RT}{\langle K(F)\cap RT\rangle R}\cong\frac{\frac{(F\star F)[F,F]R\cap RT}{R}}{\frac{\langle K(F)\cap RT \rangle R}{R}}=\frac{(G\star G)[G,G]\cap H}{\langle K(G)\cap H\rangle}$. Again there is a natural multiplicative Lie algebra homomorphism $f :\frac{H}{\langle K(G)\cap H\rangle}\longrightarrow \frac{G}{(G\star G)[G,G]}$ such that kernel of $f$ is $\im(\phi)$ and $\im(f)=\frac{H(G\star G)[G,G]}{(G\star G)[G,G]}$. Finally, at the end again we can define a multiplicative Lie algebra epimorphism $h:\frac{G}{(G\star G)[G,G]}\longrightarrow \frac{\frac{G}{H}}{(\frac{G}{H}\star \frac{G}{H})[\frac{G}{H},\frac{G}{H}]}$ such that $\ker(h)= \im(f)$. From here our result follows.
	\end{proof}
%%%%%%%%%%%%%%%%%%%%%%%%%%%%%%%%%%%%%%%%%%%%%%%%%%%%%%%%%%%%%%%%%%%%%%%%%%%%%%%%

\begin{proposition}\label{P5}
	Let $G$ be a multiplicative Lie algebra with a free presentation 
\begin{align*}
	1 \longrightarrow R \longrightarrow  F \longrightarrow G\longrightarrow 1 
\end{align*}
and let $H = \frac{RT}{R} $ be an ideal of $G$. Then the sequence  
	
\begin{align*}
			1 \longrightarrow	\frac{R\cap \langle K(F)\cap RT \rangle}{\langle K(F)\cap R \rangle} \longrightarrow \tilde{B_0}(G) \longrightarrow \tilde{B_0}( \frac{G}{H}) \longrightarrow \frac{H\cap {(G\star G)[G,G]}}{\langle K(G)\cap H \rangle}\longrightarrow 1
\end{align*}
	is exact.
\end{proposition}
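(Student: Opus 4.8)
The plan is to extract the proposed four-term sequence directly from the computations carried out in the proof of Theorem \ref{T3}, which already construct all of the relevant objects and maps. First I would fix the same compatible free presentations used there: the presentation $1 \to R \to F \to G \to 1$ of $G$ together with $1 \to R \to RT \to H \to 1$, where $T$ is an ideal of $F$ and $H = \frac{RT}{R}$. By the Hopf-type formula of Proposition \ref{P3} this yields the explicit identifications $\tilde{B_0}(G) \cong \frac{R\cap(F\star F)[F,F]}{\langle K(F)\cap R\rangle}$ and $\tilde{B_0}(\frac{G}{H}) \cong \frac{RT\cap(F\star F)[F,F]}{\langle K(F)\cap RT\rangle}$, and the canonical projection $\nu\colon G \to \frac{G}{H}$ induces the middle map $\nu^*$ of the sequence. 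The natural homomorphism $\phi\colon \tilde{B_0}(\frac{G}{H}) \to \frac{H}{\langle K(G)\cap H\rangle}$ from Theorem \ref{T3} provides the third map, after corestriction as described below.

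Next I would invoke the kernel and image computations established in Theorem \ref{T3}, namely $\ker(\nu^*) = \frac{R\cap\langle K(F)\cap RT\rangle}{\langle K(F)\cap R\rangle}$, the matching $\im(\nu^*) = \ker(\phi) = \frac{(F\star F)[F,F]\cap \langle K(F)\cap RT\rangle R}{\langle K(F)\cap RT\rangle}$, and $\im(\phi) = \frac{(G\star G)[G,G]\cap H}{\langle K(G)\cap H\rangle}$. These three facts supply exactness in the interior of the proposed sequence at once: exactness at $\tilde{B_0}(G)$ is precisely the statement that the image of the leftmost map equals $\ker(\nu^*) = \frac{R\cap\langle K(F)\cap RT\rangle}{\langle K(F)\cap R\rangle}$, and exactness at $\tilde{B_0}(\frac{G}{H})$ is exactly the identity $\im(\nu^*) = \ker(\phi)$.

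It then remains to treat the two outer positions, which is where the proposition departs slightly from Theorem \ref{T3}. On the left, the first arrow is the inclusion of $\frac{R\cap\langle K(F)\cap RT\rangle}{\langle K(F)\cap R\rangle}$ into $\tilde{B_0}(G)$; since every generator $(x\star y)[x,y]$ of $K(F)$ already lies in $(F\star F)[F,F]$, this subquotient genuinely sits inside $\frac{R\cap(F\star F)[F,F]}{\langle K(F)\cap R\rangle} \cong \tilde{B_0}(G)$, so the map is a well-defined injection and exactness at the first term holds by construction. On the right, rather than landing $\phi$ in the full quotient $\frac{H}{\langle K(G)\cap H\rangle}$ as in Theorem \ref{T3}, I would corestrict $\phi$ onto its computed image $\frac{H\cap(G\star G)[G,G]}{\langle K(G)\cap H\rangle}$, turning it into an epimorphism onto the last term and giving exactness at the final position. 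I do not expect a genuine obstacle, as the substantive content—the Hopf-formula identifications and the crucial matching $\im(\nu^*)=\ker(\phi)$—is already in hand from Theorem \ref{T3}; the only points requiring care are the bookkeeping that the left-hand subquotient really is a subgroup of $\tilde{B_0}(G)$ and that corestricting $\phi$ to its image is compatible with these identifications, so that the resulting four-term sequence is exact and properly bordered by $1$ on both ends.
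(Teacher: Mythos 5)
Your proposal is correct and follows essentially the same route as the paper: the paper's proof is a two-line appeal to the kernel and image computations already carried out in the proof of Theorem \ref{T3}, namely $\ker(\nu^*)=\frac{R\cap\langle K(F)\cap RT\rangle}{\langle K(F)\cap R\rangle}$, $\im(\nu^*)=\ker(\phi)$, and $\im(\phi)\cong\frac{H\cap(G\star G)[G,G]}{\langle K(G)\cap H\rangle}$. Your additional bookkeeping (checking that the left-hand subquotient embeds in $\tilde{B_0}(G)$ because $K(F)\subseteq (F\star F)[F,F]$, and corestricting $\phi$ onto its image) is exactly what the paper leaves implicit, so there is nothing further to add.
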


\begin{proof}
The proof of Theorem \ref{T3} implies that $\ker({\nu}^*:\tilde{B_0}(G)\to \tilde{B_0}(\frac{G}{H}))=\frac{R\cap \langle K(F)\cap RT\rangle}{\langle K(F)\cap R\rangle}$ and $\im(\phi:\tilde{B_0}(\frac{G}{H})\longrightarrow \frac{H}{\langle K(G)\cap H \rangle}) \cong \frac{H\cap {(G\star G)[G,G]}}{\langle K(G)\cap H \rangle}.$
\end{proof}	

%%%%%%%%%%%%%%%%%%%%%%%%%%%%%%%%%%%%%%%%%%%%%%%%%%%%%%%%%%%%%%%%%%%%%%
\subsection{The curly Lie exterior square}
The Lie exterior square provides essential insights into the Schur multiplier of a multiplicative Lie algebra (see \cite{RLS}). In the following discussion, we introduce a related construction that serves a similar purpose when examining $\tilde{B_0}(G)$.

Let $G$ be a multiplicative Lie algebra. Then the curly Lie exterior square $G \curlywedge^L G $ of $G$ is a multiplicative Lie algebra generated by the set $\{x\curlywedge y : x,y\in G\}\cup \{[x,y]_1 : x,y\in G \}$ of symbols subject to the following relations:
\begin{enumerate}
	\item $x\curlywedge y=1$ if $x\star y=1$, and $[x,y]_1=1$ if $[x,y]=1$,
	\item  $(x\curlywedge y)(y\curlywedge x)=1=[x,y]_1 [y,x]_1$,
	\item $(x\curlywedge yz)^{-1}(x\curlywedge y)(^yx\curlywedge {^yz})=1=[x,yz]_1^{-1}[x,y]_1[^yx,{^yz}]_1$,
	\item $(xy\curlywedge z)^{-1}(^xy\curlywedge {^xz})(x\curlywedge z)=1=[xy,z]_1^{-1}[^xy,{^xz}]_1[x,z]_1$,
	\item $[x\curlywedge y,u\curlywedge v]=[[x,y]_1,u\curlywedge v]=[x,y]_1\bar{\star}(u\wedge v)=[x,y]_1\bar{\star}[u,v]_1$,
	\item $(x\curlywedge y)(^zy\curlywedge {^zx})=(^{xy}x^{-1}\curlywedge {^xz})(x\curlywedge z)$,
	\item $[x,y]_1[^zy,{^zx}]_1=[^{xy}x^{-1},{^xz}]_1[x,z]_1$,
	\item $[[x,y]_1,[u,v]_1]=[[x,y],[u,v]]_1$,
	\item $(x\curlywedge y)\bar{\star}(u\curlywedge y)=(x\star y)\curlywedge (u\star v)$,
\end{enumerate}
for all $ x,u,y,v,z \in G.$  
Observe that the relations satisfied by the symbols $x\curlywedge y$ and $[x,y]_1$ is same as that are in the definition of Lie exterior square of a multiplicative Lie algebra except relation $(1)$. Therefore we  have the following short exact sequence
\begin{align*}
	1 \longrightarrow \tilde{B_0}(G) \longrightarrow  G \curlywedge^L G   \overset{\bar\chi} \longrightarrow (G\star G)[G,G] \longrightarrow 1 ,
\end{align*}
 where ${\bar\chi}(x\curlywedge y)= x\star y$ and ${\bar\chi}([z,w]_1)= [z,w]$.
%%%%%%%%%%%%%%%%%%%%%%%%%%%%%%%%%%%%%%%%%%%%%%%%%%%%%%%%%%%%%%%%%%%%%%%%%%%%

If $G$ is perfect multiplicative Lie algebra, then  $G \wedge^L G $ is also a perfect multiplicative Lie algebra, and 
\begin{align*}
	1 \longrightarrow \tilde{M}(G) \overset{i} \longrightarrow  G \wedge^L G   \overset{\chi} \longrightarrow G\longrightarrow 1 
\end{align*}
is a universal central extension \cite[Corollary 8.7]{RLS}. A similar description can be obtained for $G \curlywedge^L G .$

\begin{definition} 	
	\begin{enumerate}
	\item  We say that a central extension 
	\begin{align*}
		1 \longrightarrow A  \longrightarrow  H    \overset{\pi} \longrightarrow G\longrightarrow 1 
	\end{align*}
	of a multiplicative Lie algebra $G$ is commutativity-triviality preserving (CTP) if for any $a,b \in G$ satisfying $(a\star b)[a,b] = 1 $, there exists $x,y \in H $ such that $\pi(x) = a , \pi(y) = b $ with $(x\star y )[x,y] = 1 $.
	
	\item A CTP extension 
	\begin{align*}
		1 \longrightarrow A  \longrightarrow  U   \overset{\phi} \longrightarrow G\longrightarrow 1 
	\end{align*}
	of a multiplicative Lie algebra $G$ is said to be universal CTP extension if for every CTP extension 
	\begin{align*}
		1 \longrightarrow A'  \longrightarrow  U'  \overset{\phi'} \longrightarrow G\longrightarrow 1 
	\end{align*}
	of $G$, there exists a multiplicative Lie algebra homomorphism $\psi: U \to U'$ such that $\phi'\psi = \phi .$
	\end{enumerate}

\end{definition}
%%%%%%%%%%%%%%%%%%%%%%%%%%%%%%%%%%%%%%%%%%%%%%%%%%%%%%%%%%%%%%%%%%%%%%%%%%%%%%%

\begin{lemma}\label{L1}
	If 
	\begin{align*}
		1 \longrightarrow A  \longrightarrow  U   \overset{\phi} \longrightarrow G\longrightarrow 1 
	\end{align*}
	is a universal CTP extension, then $U$ is perfect.
\end{lemma}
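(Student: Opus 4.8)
The plan is to argue by contradiction, adapting the classical argument that the source of a universal central extension is perfect to the present CTP setting. Suppose that $U$ is not perfect, i.e. $(U\star U)[U,U]$ is a proper subalgebra of $U$. Then the abelianizer $V := \frac{U}{(U\star U)[U,U]}$ is a nontrivial abelian multiplicative Lie algebra, which I would equip with the trivial Lie product, and the canonical projection $q\colon U\to V$ is a nonzero multiplicative Lie algebra homomorphism.

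First I would build an auxiliary CTP extension out of $V$. Consider $\tilde U := U\times V$ with componentwise operations and the homomorphism $\pi\colon \tilde U\to G$ given by $\pi(u,v)=\phi(u)$. Its kernel is $A\times V$, which is central in $\tilde U$ because $A$ is central in $U$ and $V$ is a direct factor carrying the trivial structure; hence $\pi$ is a central extension. The crucial step is to verify that $\pi$ is in fact a CTP extension: given $a,b\in G$ with $(a\star b)[a,b]=1$, the hypothesis that $\phi$ is CTP yields $x,y\in U$ with $\phi(x)=a$, $\phi(y)=b$ and $(x\star y)[x,y]=1$; setting $\tilde x=(x,1)$ and $\tilde y=(y,1)$ gives preimages over $a$ and $b$, and since $V$ has trivial Lie product one checks componentwise that $(\tilde x\star \tilde y)[\tilde x,\tilde y]=\big((x\star y)[x,y],\,1\big)=1$. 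Thus $\pi\colon \tilde U\to G$ is a CTP extension.

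Next I would exhibit two distinct morphisms from $U$ over $G$ into this extension. Define $\psi_1,\psi_2\colon U\to \tilde U$ by $\psi_1(u)=(u,1)$ and $\psi_2(u)=(u,q(u))$. Both are multiplicative Lie algebra homomorphisms, since each coordinate is one ($q$ being the abelianization map), and both satisfy $\pi\circ\psi_1=\pi\circ\psi_2=\phi$. Because $V$ is nontrivial there is some $u\in U$ with $q(u)\neq 1$, so $\psi_1\neq \psi_2$. This contradicts the universality of the CTP extension $1\to A\to U\overset{\phi}{\longrightarrow}G\to 1$, which provides a \emph{unique} morphism to $\pi$ compatible with the projections to $G$. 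Therefore $U$ must be perfect.

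The main obstacle I anticipate is the CTP verification in the second paragraph: one must ensure that the commutativity-triviality lifting property genuinely passes to the product $U\times V$, and this is precisely where the triviality of the Lie product on $V$ is used. The second, more formal, point is that the argument leverages the uniqueness of the lift granted by the universal property, so producing two lifts is a contradiction only because the universal CTP extension is initial among CTP extensions of $G$; indeed, uniqueness of such a lift is equivalent to perfectness of $U$, which is why this is the right pressure point.
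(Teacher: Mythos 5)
Your proof is correct and follows essentially the same route as the paper: assume $U$ is not perfect, manufacture a CTP extension out of the nontrivial abelianizer $V=\frac{U}{(U\star U)[U,U]}$ (with trivial Lie product), and produce two distinct lifts of $\phi$ into it, contradicting universality. The only cosmetic difference is that you use $U\times V\to G$ where the paper uses the direct product extension $V\times G\to G$; note that both your argument and the paper's rely on the universal property supplying a \emph{unique} compatible morphism, a requirement the paper's stated definition omits but clearly intends.
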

\begin{proof}
	Suppose that $U$ is not perfect. Then $\frac{U}{(U\star U)[U,U]}$ is a nontrivial abelian group with trivial multiplicative Lie algebra. It is easy to see that the direct product extension
	\begin{align*}
		1 \longrightarrow \frac{U}{(U\star U)[U,U]}  \overset{i_1} \longrightarrow  \frac{U}{(U\star U)[U,U]}\times G   \overset{p_2} \longrightarrow G\longrightarrow 1 
	\end{align*}
	is a CTP extension  of $G$. The rest of the proof follows from \cite[Proposition 8.4]{RLS} in which there exists two Lie algebra homomorphisms from $U$ to $\frac{U}{(U\star U)[U,U]}$ such that the respective diagram commutes. Hence 
	\begin{align*}
		1 \longrightarrow A  \longrightarrow  U   \overset{\phi} \longrightarrow G\longrightarrow 1 
	\end{align*}
	can not be a CTP-universal extension.
\end{proof}
%%%%%%%%%%%%%%%%%%%%%%%%%%%%%%%%%%%%%%%%%%%%%%%%%%%%%%%%%%%%%%%%%%%%%%%%%%%5

Since homomorphic image of a perfect multiplicative Lie algebra is a perfect multiplicative Lie algebra, we have the following corollary:
\begin{corollary}
If $G$ admits a universal CTP extension, then $G$ is a perfect multiplicative Lie algebra.
\end{corollary}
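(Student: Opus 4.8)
The plan is to obtain this corollary as an immediate consequence of Lemma \ref{L1} combined with the fact, recorded in the sentence preceding the statement, that a homomorphic image of a perfect multiplicative Lie algebra is again perfect. So suppose $G$ admits a universal CTP extension
$$1 \longrightarrow A \longrightarrow U \overset{\phi}\longrightarrow G \longrightarrow 1.$$
By the definition of such an extension, $\phi$ is a surjective multiplicative Lie algebra homomorphism, and by Lemma \ref{L1} the algebra $U$ is perfect, i.e.\ $U = (U\star U)[U,U]$.

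From here I would simply transport perfectness along $\phi$. Since $\phi$ is a multiplicative Lie algebra homomorphism, it respects both the Lie product $\star$ and the group commutator, so that $\phi\bigl((U\star U)[U,U]\bigr) \subseteq (G\star G)[G,G]$. Applying $\phi$ to the equality $U = (U\star U)[U,U]$ and using surjectivity then yields
$$G = \phi(U) = \phi\bigl((U\star U)[U,U]\bigr) \subseteq (G\star G)[G,G],$$
forcing $G = (G\star G)[G,G]$. Hence $G$ is a perfect multiplicative Lie algebra, as claimed.

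There is essentially no genuine obstacle in this argument, since all the substantive work is already contained in Lemma \ref{L1}. The only point deserving an explicit line is the verification that a surjective multiplicative Lie algebra homomorphism carries the ``perfect core'' $(U\star U)[U,U]$ onto $(G\star G)[G,G]$; this is immediate from the homomorphism axioms, and it is precisely the meaning of the remark that homomorphic images of perfect algebras are perfect. I would keep the proof to these few lines rather than re-deriving the perfectness of $U$, which is supplied by the preceding lemma.
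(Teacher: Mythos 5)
Your proof is correct and follows exactly the paper's intended argument: Lemma \ref{L1} gives that $U$ is perfect, and perfectness passes to the homomorphic image $G = \phi(U)$. The paper leaves this one-line transport implicit in the sentence preceding the corollary; you have merely written it out.
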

%Conversely,
%\begin{proposition}
%Every perfect  multiplicative Lie algebra $G$ admits (of course, a unique) CTP-universal extension.
%\end{proposition}
%\begin{proof}
%Every universal central extension is CTP-universal extension. The proof follows from \cite[Proposition 8.6]{RLS}.
%\end{proof}
%%%%%%%%%%%%%%%%%%%%%%%%%%%%%%%%%%%%%%%%%%%%%%%%%%%%%%%%%%%%%%%%%%%%%%

\begin{proposition}
If $G$ is a perfect multiplicative Lie algebra , then 
\begin{align*}
E_1:	1 \longrightarrow \tilde{B_0}(G) \longrightarrow  G \curlywedge^L G   \overset{\bar\chi} \longrightarrow G \longrightarrow 1 
\end{align*}
is a universal CTP extension.
\end{proposition}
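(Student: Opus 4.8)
The plan is to imitate the proof that the Lie exterior square realises the universal central extension of a perfect multiplicative Lie algebra (\cite[Corollary 8.7]{RLS}), replacing ``central'' by ``CTP'' at every stage. Since $G$ is perfect we have $(G\star G)[G,G]=G$, so the short exact sequence preceding the statement specialises to $E_1$. As a first step I would record the ambient structural facts: exactly as for $G\wedge^L G$, the relations $(2)$--$(9)$ of the curly Lie exterior square force $\tilde{B_0}(G)=\ker(\bar\chi)$ to lie in the centre of $G\curlywedge^L G$ in the full sense, i.e.\ its elements have trivial group commutators and trivial $\bar\star$-product against every element, and $G\curlywedge^L G$ is perfect because it is generated by $\bar\star$- and commutator-type symbols. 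I must then verify two things: that $E_1$ is itself a CTP extension, and that it enjoys the required universal property.

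For the CTP property of $E_1$, I would take $a,b\in G$ with $(a\star b)[a,b]=1$ and lift them to $x,y\in G\curlywedge^L G$ along the surjection $\bar\chi$. The element $(x\bar\star y)[x,y]$ then lies in $\tilde{B_0}(G)$, and I would show it is trivial by tracking it back to the generators $a\curlywedge b$ and $[a,b]_1$: relation $(1)$ of the curly Lie exterior square is precisely the statement that these symbols die when the corresponding $\star$- or commutator-value in $G$ is trivial, and the hypothesis $(a\star b)[a,b]=1$ is the combined triviality condition that collapses the product $(a\curlywedge b)[a,b]_1$. Re-expressing $(x\bar\star y)[x,y]$ through these symbols, and discarding the ambiguity in the lifts by the full centrality recorded above, yields $(x\bar\star y)[x,y]=1$, so the chosen lifts witness the CTP condition.

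For universality, let $1\longrightarrow A'\longrightarrow U'\overset{\phi'}{\longrightarrow} G\longrightarrow 1$ be an arbitrary CTP extension. I would define $\psi\colon G\curlywedge^L G\to U'$ on generators by $x\curlywedge y\mapsto \tilde x\star\tilde y$ and $[x,y]_1\mapsto[\tilde x,\tilde y]$, where $\tilde x,\tilde y\in U'$ are any $\phi'$-preimages of $x,y$. Independence of the chosen preimages follows from the full centrality of $A'$: a different preimage differs by an element of $A'$, which is annihilated by both $\star$ and the group commutator in $U'$. That $\psi$ respects relations $(2)$--$(9)$ is checked exactly as in the universal central extension argument for $G\wedge^L G$, since those identities are consequences of the multiplicative Lie algebra axioms of Definition \ref{D1} and hold verbatim in $U'$. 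By construction $\phi'\psi=\bar\chi$, and the definition demands no uniqueness of $\psi$, so this completes universality once relation $(1)$ is checked.

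The main obstacle is exactly the compatibility of $\psi$ with the defining relation $(1)$: I must show that whenever a generator is killed in $G\curlywedge^L G$ its image under $\psi$ is trivial in $U'$. This is the only place the CTP hypothesis on the \emph{target} is used and the only feature distinguishing this argument from the purely central one. For the relevant $a,b\in G$, the CTP property of $U'$ supplies particular lifts $x_0,y_0$ with $(x_0\star y_0)[x_0,y_0]=1$; combining this with the lift-independence established above forces the value $\psi$ assigns to be trivial, so $\psi$ descends to a well-defined homomorphism on $G\curlywedge^L G$. I would take care here to align the separate symbols $x\curlywedge y$ and $[x,y]_1$ with the combined triviality condition $(x\star y)[x,y]=1$ occurring in the definition of a CTP extension, as this matching is the delicate point on which the whole argument turns.
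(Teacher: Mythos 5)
Your route is genuinely different from the paper's: you argue directly on the generators-and-relations presentation of $G\curlywedge^L G$, whereas the paper fixes a free presentation $G=F/R$, builds the auxiliary extension
\begin{align*}
E_2:\ 1\longrightarrow\frac{R\cap(F\star F)[F,F]}{\langle K(F)\cap R\rangle}\longrightarrow\frac{(F\star F)[F,F]}{\langle K(F)\cap R\rangle}\longrightarrow G\longrightarrow 1,
\end{align*}
checks the CTP property there (lifting $a,b$ to $x,y\in(F\star F)[F,F]$ gives $(x\star y)[x,y]\in K(F)\cap R$, hence trivial in the quotient), proves universality by lifting a homomorphism $F\to U$ through the free object and showing it kills the generators of $\langle K(F)\cap R\rangle$, and only at the end transfers everything to $E_1$ via the equivalence $E_1\simeq E_2$. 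Your more intrinsic approach would be attractive, but it founders exactly at the point you yourself flag as ``the delicate point,'' and you offer no mechanism to get past it.

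Concretely, the gap is that relation $(1)$ of the curly Lie exterior square and the CTP condition involve \emph{different} triviality hypotheses, and your argument conflates them twice. (i) To show $E_1$ is CTP you need $(a\curlywedge b)[a,b]_1=1$ whenever $(a\star b)[a,b]=1$; but relation $(1)$ only kills $a\curlywedge b$ when $a\star b=1$ and kills $[a,b]_1$ when $[a,b]=1$, and the combined hypothesis allows $a\star b=[a,b]^{-1}\neq 1$, in which case relation $(1)$ is silent. Proving that $(a\wedge b)[a,b]_0$ lies in $\tilde{M_0}(G)$ under the combined hypothesis is essentially Proposition \ref{P3}, i.e.\ it already requires identifying $\tilde{M_0}(G)$ with $\langle K(F)\cap R\rangle/(R\star F)[R,F]$ via the free presentation. (ii) For universality, your $\psi$ must respect relation $(1)$, which demands $\tilde a\star\tilde b=1$ in $U'$ whenever $a\star b=1$ in $G$ with no hypothesis on $[a,b]$; but the CTP property of $U'$ only produces lifts satisfying the product identity $(\tilde a\star\tilde b)[\tilde a,\tilde b]=1$, and only for pairs with $(a\star b)[a,b]=1$. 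The paper's detour through $\langle K(F)\cap R\rangle$ is designed precisely so that only the combined elements $(x\star y)[x,y]$ ever need to be killed and the two factors never have to be separated. To complete your argument you would first have to establish the identification $G\curlywedge^L G\cong (F\star F)[F,F]/\langle K(F)\cap R\rangle$ — at which point you have reconstructed the paper's proof.
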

\begin{proof}
Let $G$ is a perfect multiplicative Lie algebra. Suppose $G$ is given by the free presentation $G = \frac{F}{R}$. Then the following central extension
\begin{align*}
	1 \longrightarrow \frac{R}{\langle K(F)\cap R\rangle} \longrightarrow \frac{F}{\langle K(F)\cap R\rangle}  \overset{\pi} \longrightarrow G
	 \longrightarrow 1 
\end{align*}
 induces the perfect central extension of $G = (G\star G)[G,G]$:
\begin{align*}
E_2:	1 \longrightarrow \frac{R\cap(F\star F)[F,F]}{\langle K(F)\cap R\rangle} \longrightarrow \frac{(F\star F)[F,F]}{\langle K(F)\cap R\rangle}  \overset{\pi} \longrightarrow G \longrightarrow 1 
\end{align*}
 We claim that it is a universal CTP extension. For that let $a,b \in G \cong \frac{(F\star F)[F,F]R}{R} $ satisfying $(a\star b)[a,b] = 1 $. Then there exists $x,y \in (F\star F)[F,F] $ such that $ a = xR , b = yR $. This implies $(x\star y)[x,y] \in K(F)\cap R \subseteq \langle K(F)\cap R\rangle $. This shows that the above central extension of $G$ is CTP. Next, let 
 \begin{align*}
 	1 \longrightarrow A  \longrightarrow  U   \overset{\phi} \longrightarrow G\longrightarrow 1 
 \end{align*}
 be another CTP extension of $G$. As $F$ is free multiplicative Lie algebra, there is a homomorphism  $\tau : F \to U$ such that $\phi \tau = \pi$. Take an arbitrary $(x\star y)[x,y] \in K(F)\cap R$, where $x,y \in F$. Since $(\pi(x)\star \pi(y) )[\pi(x),\pi(y)] = 1 $, there exists $z,w \in U $ such that $(z\star w)[z,w] = 1 $. We can write $\tau(x) = zz'$ and $\tau(y) = ww'$ for some $z',w' \in A $. Then $\tau((x\star y)[x,y]) = 1$, hence $\tau $ induces a homomorphism $\nu: \frac{F}{\langle K(F)\cap R\rangle} \to U$. The restriction of $\nu$ to $ \frac{(F\star F)[F,F]}{\langle K(F)\cap R\rangle}$ gives the required map. Also $E_2$ is equivalent to the extension $E_1$. Hence the result follows.
\end{proof}
%%%%%%%%%%%%%%%%%%%%%%%%%%%%%%%%%%%%%%%%%%%%%%%%%%%%%%%%%%%%%%%%%%%%%%%%%

\begin{proposition}\label{P6}
A CTP extension 
\begin{align*}
	1 \longrightarrow A  \longrightarrow  U   \overset{\phi} \longrightarrow G\longrightarrow 1 
\end{align*}
of a multiplicative Lie algebra $G$ is universal CTP extension if and only if $U$ is perfect, and every  CTP extension of $U$ splits.
\end{proposition}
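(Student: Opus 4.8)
The plan is to prove the two implications separately, beginning with the easier direction and leaving the genuinely delicate point for the converse.

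For the direction \emph{perfect and split $\Rightarrow$ universal}, assume $U$ is perfect and every CTP extension of $U$ splits, and let $\phi'\colon U'\to G$ be an arbitrary CTP extension with central kernel $A'$. I would form the fibre product $P=U\times_G U'=\{(u,u') : \phi(u)=\phi'(u')\}$ and examine the projection $p\colon P\to U$. Its kernel is $\{1\}\times A'$, which is central in $P$ since $A'$ is central in $U'$, so $p$ is a central extension; it is moreover CTP, because if $(u_1\star u_2)[u_1,u_2]=1$ in $U$ then $a=\phi(u_1),\,b=\phi(u_2)$ satisfy $(a\star b)[a,b]=1$ in $G$, and the CTP property of $\phi'$ produces lifts $u_1',u_2'$ with $(u_1'\star u_2')[u_1',u_2']=1$, whence $(u_1,u_1'),(u_2,u_2')\in P$ are lifts of $u_1,u_2$ with the required triviality. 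By hypothesis $p$ splits through some $s\colon U\to P$, and $\psi=\mathrm{pr}_{U'}\circ s$ gives $\phi'\psi=\phi$. Here perfectness is used only for uniqueness: any two maps $U\to U'$ over $G$ differ by a homomorphism from $U$ into the central, trivial-Lie subgroup $A'$, which must factor through $U/(U\star U)[U,U]=1$.

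For the converse, assume $\phi\colon U\to G$ is a universal CTP extension. Perfectness of $U$ is precisely Lemma \ref{L1}, so it remains to show that every CTP extension of $U$ splits. My strategy is to reduce this to the statement that the universal CTP extension of $U$ is itself trivial. Since $U$ is perfect, the preceding proposition provides its universal CTP extension $1\to\tilde{B_0}(U)\to U\curlywedge^L U\overset{\bar\chi}{\longrightarrow}U\to1$, whose total space is again perfect by Lemma \ref{L1}. I would then consider the composite $\phi\circ\bar\chi\colon U\curlywedge^L U\to G$ and argue that it is again a CTP extension of $G$: its CTP lifting property follows by composing the liftings of $\bar\chi$ and of $\phi$, while for centrality of its kernel $\bar\chi^{-1}(A)$ (where $A=\ker\phi$) I would use that, because $\tilde{B_0}(U)=\ker\bar\chi$ is central, the commutator and star operations on $U\curlywedge^L U$ descend to well-defined pairings in terms of images in $U$; fixing a central $a\in A$, the assignments $u\mapsto[\,\hat u,\hat a\,]$ and $u\mapsto \hat u\star\hat a$ are then homomorphisms from $U$ into the central abelian subgroup $\tilde{B_0}(U)$, so they factor through $U/(U\star U)[U,U]=1$ and vanish. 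Granting this, universality of $\phi$ yields $g\colon U\to U\curlywedge^L U$ with $(\phi\bar\chi)g=\phi$; since $\bar\chi g$ and $\mathrm{id}_U$ are both endomorphisms of $U$ lying over $\phi$, perfectness forces $\bar\chi g=\mathrm{id}_U$, so $\bar\chi$ splits. A split central extension with perfect total space has trivial kernel, whence $\tilde{B_0}(U)=1$ and $U\curlywedge^L U\cong U$; composing the universal CTP map $U\curlywedge^L U\to V$ of any CTP extension $V\to U$ with this isomorphism then splits $V\to U$.

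The main obstacle is the middle step of the converse: verifying that the composite $\phi\circ\bar\chi\colon U\curlywedge^L U\to G$ is a genuine CTP (in particular central) extension of $G$. The commutators and stars of elements of $\bar\chi^{-1}(A)$ land a priori only in $\tilde{B_0}(U)$, not in the identity, and it is exactly the perfectness of $U$ together with the bimultiplicativity of the commutator and star pairings---the same section-independence that was exploited in the definition of $\Gamma_{(f,h)}$ in the proof of Theorem \ref{T2}---that forces them to be trivial. Everything else is bookkeeping with fibre products and the uniqueness argument driven by $U/(U\star U)[U,U]=1$.
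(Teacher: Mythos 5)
Your forward direction (perfect plus ``every CTP extension of $U$ splits'' implies universal) is the same fibre-product argument as the paper's, including the use of perfectness for uniqueness of the comparison map. Your converse is correct but takes a genuinely different route. The paper argues directly: given an arbitrary CTP extension $1\to B\to K\overset{\psi}{\longrightarrow}U\to 1$, it forms the composite $\phi\psi:K\to G$, cites \cite[Proposition 8.8]{RLS} for centrality of its kernel, checks the CTP property by composing lifts, and then uses universality of $\phi$ together with perfectness of $U$ to get $\phi':U\to K$ with $\psi\phi'=I_U$, splitting the given extension on the spot. You instead apply the composition argument only to the single extension $\bar\chi:U\curlywedge^L U\to U$ supplied by the preceding proposition, conclude that $\bar\chi$ splits, hence (perfect total space) that $\tilde{B_0}(U)=1$, and only then split an arbitrary CTP extension of $U$ by composing its universal comparison map out of $U\curlywedge^L U\cong U$ with $\bar\chi^{-1}$. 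Your route invokes the composition lemma just once and yields $\tilde{B_0}(U)=1$ as a byproduct, but it depends on the earlier proposition identifying the universal CTP extension of a perfect algebra, which the paper's direct argument does not need. One caution on your centrality step: a group homomorphism $u\mapsto[\hat u,\hat a]$ into the abelian kernel factors a priori only through $U/[U,U]$; to factor through $U/(U\star U)[U,U]$ you must also show it kills $U\star U$, which requires the multiplicative Lie algebra axioms (4) and (5) in $U\curlywedge^L U$ (for instance ${}^{\hat a}(\hat x\star\hat y)={}^{\hat a}\hat x\star{}^{\hat a}\hat y$ forces $[\hat a,\hat x\star\hat y]=1$ once $[\hat a,\hat x]$ and $[\hat a,\hat y]$ are central with trivial star, and the Jacobi-type identity handles $\hat a\star(\hat x\star\hat y)$). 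You correctly flag this as the delicate point and name the right mechanism, so it is a matter of writing the identities out rather than a gap; the paper sidesteps it entirely by citing \cite[Proposition 8.8]{RLS}.
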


\begin{proof}
	First assume that $U$ is perfect  multiplicative Lie algebra, and that every  CTP extension of $U$ splits. Let 
	\begin{align*}
		1 \longrightarrow A'  \longrightarrow  U'  \overset{\phi'} \longrightarrow G\longrightarrow 1 
	\end{align*}
	be an arbitrary CTP extension of $G.$ Consider the Lie subalgebra  $U\times_G U' = \{(u,u')\in U\times U' : \phi(u) = \phi'(u') \}$ of $ U\times U'$  and let $\pi : U\times U' \to U$ be the projection to the first factor. Then 
	\begin{align*}
		1 \longrightarrow \ker(\pi)  \longrightarrow U\times_G U'  \overset{\pi} \longrightarrow U\longrightarrow 1 
	\end{align*}
is a CTP extension of $U.$	From our hypothesis, the sequence splits. Let $t$ be an splitting of $\pi.$ Then there is a homomorphism $\psi: U \to U'$ such that $t(u) = (u, \psi(u)) \in U\times_G U' .$ Since $U$ is perfect, $\psi$ is uniquely determined. Also $\phi'(\psi(u)) = u.$ Thus the extension
\begin{align*}
	1 \longrightarrow A  \longrightarrow  U   \overset{\phi} \longrightarrow G\longrightarrow 1 
\end{align*}
is CTP-universal.

Conversely, suppose that the extension 
\begin{align*}
E:	1 \longrightarrow A  \longrightarrow  U   \overset{\phi} \longrightarrow G\longrightarrow 1 
\end{align*}
is universal CTP extension. Then by the Lemma \ref{L1}, $U$ is perfect. Let 
\begin{align*}
E':	1 \longrightarrow B  \overset{i}\longrightarrow  K   \overset{\psi} \longrightarrow U\longrightarrow 1 
\end{align*}
is a CTP extension of $U$. Then the extension 
\begin{align*}
E'':	1 \longrightarrow \ker(\phi \psi)  \overset{i}\longrightarrow  K   \overset{\phi \psi} \longrightarrow G\longrightarrow 1 
\end{align*}
is a central extension of $G$ by \cite[Proposition 8.8]{RLS}. Since $E$ is universal CTP extension, for any $a,b \in G$ satisfying $(a\star b)[a,b] = 1 $, there exists $x,y\in U $ such that $\phi(x) = a , \phi(y) = b $ with $(x\star y)[x,y] = 1 .$ Again, $E'$ is a CTP extension , there exists $x',y' \in K $ such that $\psi(x') = x , \psi(y') = y $ with $(x'\star y')[x',y'] = 1 .$ This implies that $E''$ is a CTP extension. Thus, there is a unique Lie homomorphism $\phi'$ from $U$ to $K$ such that $\phi \psi \phi' = \phi$. This shows that $\psi \phi'$ and $I_U$ are Lie homomorphism from $U$ to $U$ which induce morphism from $E$ to itself. Since $E$ i suniversal CTP extension, it follows that $\psi \phi' = I_U$. This shows that $E'$ splits.

\end{proof}

\begin{theorem}
	Let $G_1$ and $G_2$ be isoclinic multiplicative Lie algebras. Then $\tilde{B_0}(G_1) \cong \tilde B_0(G_2) $.
\end{theorem}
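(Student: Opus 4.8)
The plan is to prove that isoclinism preserves the Bogomolov multiplier by exhibiting a natural isomorphism $\tilde{B_0}(G_1)\cong\tilde{B_0}(G_2)$ built from the isoclinism data $(\lambda,\mu)$. The conceptual backbone is that $\tilde{B_0}(G)$ was shown in Proposition~\ref{P4} to depend only on $\tilde M(G)$ and the ``commutator-triviality'' set $K(G^*)$ inside a cover, and both of these are insensitive to the center of $G$. Since isoclinism is precisely an identification of the groups ``modulo center'' together with a compatible identification of $\, ^M[G,G]=(G\star G)[G,G]$, the strategy is to show that all the ingredients defining $\tilde{B_0}$ live on the isoclinism-invariant part of $G$. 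Concretely, I would first use Theorem~\ref{T2} to reduce to comparing $B_0(G_1)$ and $B_0(G_2)$, since $\tilde{B_0}(G_i)\cong B_0(G_i)$, and then transport the whole structure through $(\lambda,\mu)$.

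My preferred route is via the curly Lie exterior square and its universal CTP property. First I would recall that for a perfect multiplicative Lie algebra the sequence $E_1: 1\to\tilde{B_0}(G)\to G\curlywedge^L G\to G\to 1$ is a universal CTP extension, and by Proposition~\ref{P6} that property characterizes $G\curlywedge^L G$ up to isomorphism as the unique perfect multiplicative Lie algebra with $K(G)$-splitting covers. The key observation is that both the generators $x\curlywedge y$ and $[x,y]_1$ and the defining relations of the curly Lie exterior square are expressed entirely through $x\star y$, $[x,y]$, and conjugation $^z(-)$, all of which descend to operations on $G/\mathcal Z(G)$ and on $(G\star G)[G,G]$. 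Thus the isoclinism maps $\lambda:G_1/\mathcal Z(G_1)\to G_2/\mathcal Z(G_2)$ and $\mu:(G_1\star G_1)[G_1,G_1]\to (G_2\star G_2)[G_2,G_2]$, which by definition intertwine the star/commutator pairings $\phi_s,\psi_s$ and $\phi_c,\psi_c$, should induce a well-defined isomorphism $G_1\curlywedge^L G_1\to G_2\curlywedge^L G_2$ sending $x\curlywedge y\mapsto \lambda(\bar x)\curlywedge\lambda(\bar y)$ and $[x,y]_1\mapsto\mu([x,y])$-type expressions. Restricting this isomorphism to the kernels $\tilde{B_0}(G_i)=\ker(\bar\chi)$ then yields the claimed isomorphism.

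The main steps, in order, are: (i) verify that each defining relation of $G\curlywedge^L G$ involves its arguments only through star-products, commutators, and conjugations, so that the assignment using $\lambda$ and $\mu$ respects every relation; (ii) check that relation~(1) of the curly exterior square — which is exactly the extra relation killing elements with $x\star y=1$ and $[x,y]=1$ — is preserved, using that isoclinism sends pairs with trivial star-product and commutator to such pairs (this is where the commutativity of the isoclinism diagram is essential); (iii) conclude that the induced map on the whole curly exterior square is an isomorphism, with inverse built from $(\lambda^{-1},\mu^{-1})$; and (iv) restrict to $\bar\chi^{-1}(1)=\tilde{B_0}(G_i)$ to obtain the isomorphism $\tilde{B_0}(G_1)\cong\tilde{B_0}(G_2)$. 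As an alternative or a cross-check, I would run the Hopf-type formula of Proposition~\ref{P3}, choosing free presentations $G_i\cong F_i/R_i$ and showing that the quotients $R_i\cap(F_i\star F_i)[F_i,F_i]/\langle K(F_i)\cap R_i\rangle$ coincide under the isoclinism; this is the analogue of P.~K.~Rai's Lie-algebra argument cited in the introduction.

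The hard part will be step~(i)–(ii): making rigorous that the assignment $x\curlywedge y\mapsto\lambda(\bar x)\curlywedge\lambda(\bar y)$ is \emph{well-defined}, i.e.\ independent of the coset representatives, and simultaneously compatible with the $[x,y]_1$-generators via $\mu$. The subtlety is that $\lambda$ lives on $G/\mathcal Z(G)$ while the curly generators $x\curlywedge y$ are indexed by elements of $G$ itself, so I must confirm that changing $x$ by a central element does not change $x\curlywedge y$ in $G\curlywedge^L G$ — this should follow from relations~(3)–(4) together with the fact that central elements have trivial star-product and commutator against everything, hence $z\curlywedge y=1=[z,y]_1$ whenever $z$ is central. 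Establishing this centrality-invariance cleanly, and then checking that the two families of generators transform consistently under the single pair $(\lambda,\mu)$ so that the mixed relations~(5)–(9) (which couple $\curlywedge$ and $[-,-]_1$) are respected, is the technical crux; once that bookkeeping is done, the isomorphism and its restriction to $\tilde{B_0}$ are formal.
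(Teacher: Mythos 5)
Your proposal follows essentially the same route as the paper: use the isoclinism pair $(\lambda,\mu)$ to induce a map $G_1\curlywedge^L G_1\to G_2\curlywedge^L G_2$ on generators, check it respects the defining relations (with inverse built from $\lambda^{-1}$), and then restrict to $\ker(\bar\chi)=\tilde{B_0}(G_i)$ via the commutative diagram of short exact sequences. Your explicit attention to well-definedness with respect to coset representatives (central elements giving trivial generators) is a point the paper's proof passes over silently, but it does not change the argument's structure.
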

\begin{proof}
	As $G_1$ and $G_2$ are isoclinic multiplicative Lie algebras, there exist multiplicative Lie algebra isomorphisms $\lambda:\frac{G_1}{\mathcal{Z}(G_1)} \longrightarrow \frac{G_2}{\mathcal{Z}(G_2)}$ and $\mu:\ ^M[G_1, G_1]\longrightarrow \ ^M[G_2, G_2]$ such that the image of $^M[a,b]$ under $\mu$ is compatible with the image of $a\mathcal{Z}(G_1)$ and $b\mathcal{Z}(G_1)$ under $\lambda$. This implies, whenever $\lambda(a\mathcal{Z}(G_1)) = a'\mathcal{Z}(G_2)$ and $\lambda(b\mathcal{Z}(G_1)) = b'\mathcal{Z}(G_2)$ then $\mu[a,b] = [a',b']$ and $\mu(a\star b) = (a'\star b')$.
Thus, we have a multiplicative Lie algebra homomorphism $\eta : G_1 \curlywedge^L G_1  \to G_2 \curlywedge^L G_2  $ given by $\eta(a\curlywedge b) = (a'\curlywedge b') $ and $\eta[a,b]_1 = [a',b']_1$.  Similarly, we define $\delta : G_2 \curlywedge^L G_2 \to G_1 \curlywedge^L G_1 $ via $\lambda^{-1}$, which is the inverse of $\eta $. Thus, $\eta $ is an  isomorphism. In this way, we have the following commutative diagram:
	
	\[
	\xymatrix{
	 1\ar[r] &	\tilde B_0(G_1)\ar[d]_{\bar\eta}\ar[r] & G_1 \curlywedge^L G_1  \ar[r]^{~~~~\kappa_1} \ar[d]_{\eta}  & ^M[G_1, G_1]  \ar[d]_{\mu} \ar[r] & 1 \\		1\ar[r] & \tilde B_0(G_2)\ar[r]& G_2 \curlywedge^L G_2 \ar[r]^{~~~~\kappa_2} & ^M[G_2, G_2] \ar[r] & 1 }
	\] 
	where $\bar \eta$ is the restriction of $\eta$. Hence   $\bar \eta$ is also an isomorphism.
\end{proof}

\noindent{\bf Acknowledgement:}
The first named author sincerely thanks IIIT Allahabad and the University grant commission (UGC), Govt. of India, New Delhi for the research fellowship.

\end{document}